\def\append@label@year@{%
    \safe@set\@tempcnta\bib@year
    \edef\bib@citeyear{\the\@tempcnta}%
    \ifnum\bib@citeyear>9
      \append@to@stem{%
          \ifx\bib@year\@empty
          \else
            \@xp\year@short \bib@citeyear \@nil
          \fi
      }%
    \fi
}
\let\oldtocsection=\tocsection
\renewcommand{\tocsection}[2]{\hspace{0em}\oldtocsection{#1}{#2}}
\def\upddots{\mathinner{\mkern 1mu\raise 1pt \hbox{.}\mkern 2mu
\mkern 2mu \raise 4pt\hbox{.}\mkern 1mu \raise 7pt\vbox {\kern 7
pt\hbox{.}}} }
\numberwithin{equation}{section}
\begin{document}
\setlength{\unitlength}{2.5cm}

\newtheorem{thm}{Theorem}[section]
\newtheorem{lm}[thm]{Lemma}
\newtheorem{prop}[thm]{Proposition}
\newtheorem{cor}[thm]{Corollary}
\newtheorem{conj}[thm]{Conjecture}
\newtheorem{specu}[thm]{Speculation}

\theoremstyle{definition}
\newtheorem{dfn}[thm]{Definition}
\newtheorem{eg}[thm]{Example}
\newtheorem{rmk}[thm]{Remark}

\newcommand{\F}{\mathbf{F}}
\newcommand{\N}{\mathbbm{N}}
\newcommand{\R}{\mathbbm{R}}
\newcommand{\C}{\mathbbm{C}}
\newcommand{\Z}{\mathbbm{Z}}
\newcommand{\Q}{\mathbbm{Q}}
\newcommand{\Mp}{{\rm Mp}}
\newcommand{\Sp}{{\rm Sp}}
\newcommand{\GSp}{{\rm GSp}}
\newcommand{\GL}{{\rm GL}}
\newcommand{\PGL}{{\rm PGL}}
\newcommand{\SL}{{\rm SL}}
\newcommand{\SO}{{\rm SO}}
\newcommand{\Spin}{{\rm Spin}}
\newcommand{\GSpin}{{\rm GSpin}}
\newcommand{\Ind}{{\rm Ind}}
\newcommand{\Res}{{\rm Res}}
\newcommand{\Hom}{{\rm Hom}}
\newcommand{\End}{{\rm End}}
\newcommand{\msc}[1]{\mathscr{#1}}
\newcommand{\mfr}[1]{\mathfrak{#1}}
\newcommand{\mca}[1]{\mathcal{#1}}
\newcommand{\mbf}[1]{{\bf #1}}
\newcommand{\mbm}[1]{\mathbbm{#1}}
\newcommand{\into}{\hookrightarrow}
\newcommand{\onto}{\twoheadrightarrow}
\newcommand{\s}{\mathbf{s}}
\newcommand{\cc}{\mathbf{c}}
\newcommand{\bfa}{\mathbf{a}}
\newcommand{\id}{{\rm id}}
\newcommand{\g}{ \mathbf{g} }
\newcommand{\w}{\mathbbm{w}}
\newcommand{\Ftn}{{\sf Ftn}}
\newcommand{\p}{\mathbf{p}}
\newcommand{\bq}{\mathbf{q}}
\newcommand{\WD}{\text{WD}}
\newcommand{\W}{\text{W}}
\newcommand{\Wh}{{\rm Wh}}
\newcommand{\Whc}{{{\rm Wh}_\psi}}
\newcommand{\ggma}{\omega}
\newcommand{\sct}{\text{\rm sc}}
\newcommand{\Of}{\mca{O}^\digamma}
\newcommand{\gk}{c_{\sf gk}}
\newcommand{\Irr}{ {\rm Irr} }
\newcommand{\Irrg}{ {\rm Irr}_{\rm gen} }
\newcommand{\diag}{{\rm diag}}
\newcommand{\uchi}{ \underline{\chi} }
\newcommand{\Tr}{ {\rm Tr} }
\newcommand{\der}\de
\newcommand{\Stab}{{\rm Stab}}
\newcommand{\Ker}{{\rm Ker}}
\newcommand{\bfp}{\mathbf{p}}
\newcommand{\bfq}{\mathbf{q}}
\newcommand{\KP}{{\rm KP}}
\newcommand{\Sav}{{\rm Sav}}
\newcommand{\de}{{\rm der}}
\newcommand{\tnu}{{\tilde{\nu}}}
\newcommand{\lest}{\leqslant}
\newcommand{\gest}{\geqslant}
\newcommand{\tu}{\widetilde}
\newcommand{\tchi}{\tilde{\chi}}
\newcommand{\tomega}{\tilde{\omega}}
\newcommand{\Rep}{{\rm Rep}}
\newcommand{\cu}[1]{\textsc{\underline{#1}}}
\newcommand{\set}[1]{\left\{#1\right\}}
\newcommand{\ul}[1]{\underline{#1}}
\newcommand{\wt}[1]{\overline{#1}}
\newcommand{\wtsf}[1]{\wt{\sf #1}}
\newcommand{\anga}[1]{{\left\langle #1 \right\rangle}}
\newcommand{\angb}[2]{{\left\langle #1, #2 \right\rangle}}
\newcommand{\wm}[1]{\wt{\mbf{#1}}}
\newcommand{\elt}[1]{\pmb{\big[} #1\pmb{\big]} }
\newcommand{\ceil}[1]{\left\lceil #1 \right\rceil}
\newcommand{\floor}[1]{\left\lfloor #1 \right\rfloor}
\newcommand{\val}[1]{\left| #1 \right|}
\newcommand{\aff}{ {\rm aff} }
\newcommand{\ex}{ {\rm ex} }
\newcommand{\exc}{ {\rm exc} }
\newcommand{\HH}{ \mca{H} }
\newcommand{\HKP}{ {\rm HKP} }
\newcommand{\std}{ {\rm std} }
\newcommand{\motimes}{\text{\raisebox{0.25ex}{\scalebox{0.8}{$\bigotimes$}}}}

\newcommand{\FF}{\mca{F}}
\newcommand{\tv}{\tilde{v}}

\title[Gelfand--Graev functor and quantum affine Schur--Weyl duality]{Genuine Gelfand--Graev functor and the quantum affine Schur--Weyl duality}

\author{Fan Gao, Nadya Gurevich, and Edmund Karasiewicz}
\address{F. Gao: School of Mathematical Sciences, Zhejiang University, 866 Yuhangtang Road, Hangzhou, China 310058}
\email{gaofan@zju.edu.cn}
\address{N. Gurevich: Department of Mathematics, Ben Gurion University of the Negev, Be'er Sheva,  Israel 8410501}
\email{ngur@math.bgu.ac.il }
\address{E. Karasiewicz: Department of Mathematics, University of Utah, Salt Lake City, USA 84112}
\email{karasiewicz@math.utah.edu}

\date{}
\subjclass[2010]{Primary 11F70; Secondary 22E50, 20G42}
\keywords{covering groups, Iwahori--Hecke algebras, Gelfand--Graev representations, Coxeter hyperplane arrangement, quantum affine Schur--Weyl duality}
\maketitle

\begin{abstract} 
We explicate relations among the Gelfand--Graev modules for central covers, the Euler--Poincar\'e polynomial of the Arnold--Brieskorn manifold, and the quantum affine Schur--Weyl duality. These three objects and their relations are dictated by a permutation representation of the Weyl group. 

Specifically, our main result shows that for certain covers of $\mathrm{GL}(r)$ the Gelfand--Graev functor is related to quantum affine Schur--Weyl duality. Consequently, the commuting algebra of the Iwahori-fixed part of the Gelfand--Graev representation is the quotient of a quantum group.
\end{abstract}

\tableofcontents


\section{Introduction}
It might be appropriate to retitle the paper as ``A short tale of a permutation representation of the Weyl group", since the main content concerns objects and their relations as depicted in the following diagram:

\begin{equation} \label{PIC}
\begin{tikzcd}
&   &({\rm ind}_{\mu_n \times U^-}^{\wt{G}} \iota\times \psi)^I \ar[dd, dashed, "{f_{23}}"] \\
{\rm EP}(\msc{M}_{\rm AB}, X) \ar[rru, dashed, bend left=13, "{f_{12}}"] \ar[rrd, dashed, bend right=13, "{f_{13}}"'] & W \curvearrowright Y/Y_{Q,n}  \ar[ur, dashed, "{f_{02}}"]  \ar[dr, dashed, "{f_{03}}"] \ar[l, dashed, "{f_{01}}"] & \\
&   & \mca{F}_{\rm SW}^\Psi,
\end{tikzcd}
\end{equation}
where
\begin{enumerate}
\item[(i)] $W \curvearrowright Y/Y_{Q,n}$ denotes the permutation representation
$$\sigma_\msc{X}: W \longrightarrow  {\rm Perm}(\msc{X}_{Q,n})$$
 of the Weyl group $W$ of a root system $\Phi$ acting on $\msc{X}:=\msc{X}_{Q,n}:=Y/Y_{Q,n}$ with $Y$ the cocharacter lattice and $Y_{Q,n} \subset Y$ a certain sublattice (see \S \ref{S:FWh} for details);
\item[(ii)] ${\rm EP}(\msc{M}_{\rm AB}, X)$ denotes the Euler--Poincar\'e polynomial of the Arnold--Brieskorn manifold $\msc{M}_{\rm AB}$ (see \cite{Arn69, Bri73}), the complement of the full Coxeter hyperplane arrangement of the root system $\Phi$;
\item[(iii)] $({\rm ind}_{\mu_n \times U^-}^{\wt{G}} \iota\times \psi)^I=:\mca{V}$ is the Iwahori-component of the Gelfand--Graev representation of an $n$-fold central cover $\wt{G}$ of a connected reductive group $G$ with root system $\Phi$, see \cite{GGK};
\item[(iv)] $\mca{F}_{\rm SW}^\Psi: \mca{M}_L(\HH(S_r^\aff))  \to \mca{M}_L(U_q(\hat{\mfr{sl}}(n_\alpha))$ is the quantum affine Schur--Weyl duality functor of left modules, defined for the Iwahori--Hecke algebra $\HH(S_r^\aff)$ of $\GL_r$ and the quantum affine group $U_q(\hat{\mfr{sl}}(n_\alpha))$, see \cite{ChPr96, DDF12, Ant}.
\end{enumerate}

Now we elaborate on our main results relating the above objects. Note, some notation is taken from \cite{GGK}. We provide precise references when needed.


Henceforth, we fix a $p$-adic field $F$ with prime ideal $\mfr{p}_F$ and ring of integers $O_F$. We have $G:=\mbf{G}(F)$ for a smooth connected split reductive group scheme $\mbf{G}$ over $O_F$ with root system $\Phi$. Assume that $F^\times $ contains the full group $\mu_n$ of $n$-th roots of unity and $\gcd(p, n)=1$. For a split maximal torus $\mbf{T}\subset \mbf{G}$ let 
$$Y:=\Hom(\mbf{G}_m, \mbf{T})$$
denote the cocharacter lattice. Let $W$ be the Weyl group of $(\mbf{G},\mbf{T})$. Partially depending on a $W$-invariant quadratic form 
$$Q: Y \longrightarrow \Z,$$
one has a Brylinski--Deligne central extension
$$\begin{tikzcd}
\mu_n \ar[r, hook] & \wt{G}^{(n)} \ar[r, two heads,"\mathrm{pr}"] & G.
\end{tikzcd}$$
For any $H \subset G$, let $\wt{H}=\mathrm{pr}^{-1}(H)$; in particular, $\wt{G}=\wt{G}^{(n)}$. For every root $\alpha$, we define
$$n_\alpha:=\frac{n}{\gcd(n, Q(\alpha^\vee))} \in \N,$$
which appears in (iv) and only depends on the length of $\alpha$.  

Throughout, we fix a faithful character $\iota: \mu_n \into \C^\times$ and consider $\iota^{\pm1}$-genuine representations of $\wt{G}$, meaning $\mu_n$ acts via $\iota^{\pm1}$. We write $\mathrm{Irr}_{\iota}(\overline{G})$ for the set of isomorphism classes of irreducible $\iota$-genuine smooth representations of $\overline{G}$.

Let $U^- \subset B^-=TU^-$ be the opposite unipotent radical of a fixed Borel subgroup $B=TU$. We fix a non-degenerate character
$$\psi: U^- \longrightarrow \C$$
of conductor $\mfr{p}_F$, i.e., $\psi: U_{-\alpha} \to \C$ is of conductor $\mfr{p}_F$ for every simple root $\alpha$. The cover $\wt{G}$ splits canonically over $U$ and $U^-$, which are thus viewed as subgroups of $\wt{G}$. For every $\sigma \in \Irr_{\iota^{-1}}(\wt{G})$, we define the $\psi$-Whittaker space of $\sigma$ to be
$$\Wh_\psi(\sigma):=\Hom_{\wt{G}}({\rm ind}_{\mu_n \times U^-}^{\wt{G}} \iota \times \psi^{-1}, \sigma^\vee),$$
where $\sigma^\vee \in \Irr_{\iota}(\wt{G})$ is the contragredient representation of $\sigma$.

Define $K:=\mbf{G}(O_F)$, and let $I$ be the preimage of $\mbf{B}(O_{F}/\frak{p}_{F})$ under the mod $\frak{p}_{F}$ reduction map $K\rightarrow \mbf{G}(O_{F}/\frak{p}_{F})$. We fix a splitting $s_K:K \into \wt{G}$ and identify $K$ and $I$ with their image in $\wt{G}$. Let $C^{\infty}_{c}(\overline{G})$ be the space of smooth compactly supported functions on $\overline{G}$. Then
$$\HH_I:= C_{c, \iota}^\infty(I\backslash \wt{G} /I)=\{f\in C^{\infty}_{c}(\overline{G})|f(\zeta\gamma_{1}g\gamma_{2})=\iota(\zeta)f(g)\text{ for all }g\in \overline{G},\gamma_{j}\in I,\zeta\in \mu_{n}\},$$
is the $\iota$-genuine Iwahori--Hecke algebra. By Borel and Casselman, the map 
$$\sigma \mapsto \sigma^I$$
gives a bijection between $\iota^{-1}$-genuine irreducible Iwahori-spherical representations of $\wt{G}$ and simple modules over $\HH_I$, see  \cite[Proposition (17.1)]{FlKa86} or \cite[Corollary 6.8]{Sav04}. 

We also consider the Hecke algebra $\widehat{\HH}_{I}$ obtained from $\HH_{I}$ by precomposing by the antiinvolution $\widehat{\phantom{X}}:\overline{G}\rightarrow\overline{G}$ defined by $g\mapsto g^{-1}$. Note that $\widehat{\HH}_{I}$ consists of $\iota^{-1}$-genuine functions. The introduction of $\widehat{\HH}_{I}$ is not essential; it allows us to more easily align our results with the existing literature on quantum groups. Every right $\HH_I$-module is canonically viewed as a left $\widehat{\HH}_I$-module.

In \cite{GGK}, we determined the structure of 
$\mca{V}=({\rm ind}_{\mu_n \times U^-}^{\wt{G}} \iota\times \psi)^I$ as a right $\HH_I$-module (equivalently, as a left $\widehat{\HH}_{I}$-module) for $\msc{X}$-splitting covers (see Definition \ref{D:Xspl}). In particular, the permutation representation $\sigma_\msc{X}$ in (i) underlies this structure. We explain this in \S \ref{S:FWh}, which gives the relation $f_{02}$ in \eqref{PIC}. Note, in this paper we use $\mca{V}$, while we used $\mca{V}^I$ in \cite{GGK} for the same module. 

For any algebra $A$, we denote by $\mca{M}_L(A), \mca{M}_R(A)$ the category of left and right modules over $A$ respectively. 

We show in Proposition \ref{P:Meq} that if $n$ is stable (see Definition \ref{D:stab}), then the functor $\mathrm{Hom}_{\HH_{I}}(\mathcal{V},-)$ gives an equivalence of categories between $\mathcal{M}_{R}(\HH_{I})$ and $\mathcal{M}_{R}(\mathrm{End}_{\HH_{I}}(\mca{V}))$. So, in particular $\HH_I$ and $\End_{\HH_I}(\mca{V})$ are Morita equivalent. For Kazhdan--Patterson covers and Savin covers of $\GL_r$, stability is equivalent to the inequality $n_\alpha\gest  r$, see Example \ref{E:GLsta}. Such results provide a partial understanding of the affine $q$-Schur algebra $\End_{\mca{H}_I}(\mca{V})$. In the linear case, Bushnell and Henniart determine the structure of endomorphism algebra of the Gelfand--Graev representation  for general Bernstein classes \cite[Theorem 4.3]{BuHe03}.

In \S \ref{S:EP}, we determine the stability of $n$ for covers $\wt{G}^{(n)}$ of semisimple simply-connected $G$. To this end, we must compute the Whittaker dimension
$$\dim \Wh_\psi(\Theta(\wt{G}^{(n)}))$$
of the theta representation. Using Sommers' computation of the character of the permutation representation $\sigma_\msc{X}$ \cite{Som97}, we show the following.
\begin{thm}[Theorem \ref{T:Whd}] \label{T:M1}
For $\wt{G}^{(n)}$ an oasitic cover of a semisimple simply-connected $G$ with $Q(\alpha^\vee)=-1$ for any long root $\alpha$, one has
$$\dim \Wh_\psi(\Theta(\wt{G}^{(n)})) = \val{W}^{-1} \cdot n^r \cdot {\rm EP}(\msc{M}_{\rm AB}, -n^{-1}) = \val{W}^{-1} \cdot \prod_{j=1}^r (n - m_j),$$
where the $m_j$'s are the exponents of the Weyl group $W$. 
\end{thm}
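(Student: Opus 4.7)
The plan is to express $\dim\Wh_\psi(\Theta(\wt{G}^{(n)}))$ as a sign-twisted average of the permutation character $\chi_{\sigma_\msc{X}}$ over $W$, then evaluate via Sommers' character formula and the classical (signed) Shephard--Todd--Chevalley identity; the identification with the Euler--Poincar\'e polynomial of $\msc{M}_{\rm AB}$ is then a direct consequence of Brieskorn's theorem.

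First I would combine the structural description of $\mca{V}$ provided in \S\ref{S:FWh} (the relation $f_{02}$), which exhibits $\mca{V}$ as a right $\HH_I$-module whose underlying combinatorics is the permutation $W$-set $\msc{X}_{Q,n}$, with the realization of $\Theta(\wt{G}^{(n)})$ as the unique irreducible subrepresentation of the appropriate exceptional unramified principal series. Under the Borel--Casselman bijection $\sigma\mapsto\sigma^I$, the Iwahori component $\Theta(\wt{G}^{(n)})^I$ is a specific simple $\HH_I$-module (the ``sign'' / longest-element eigenline in the principal series). Frobenius reciprocity then computes $\dim\Wh_\psi(\Theta(\wt{G}^{(n)}))=\dim\Hom_{\HH_I}(\Theta(\wt{G}^{(n)})^I,\mca{V})$ as a count of regular $W$-orbits on $\msc{X}_{Q,n}$, which by M\"obius inversion over the poset of parabolic subgroups (Steinberg: point-stabilizers are parabolic) equals
\[
\frac{1}{\val{W}}\sum_{w\in W}\det(w)\,\chi_{\sigma_\msc{X}}(w).
\]

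Next I would invoke Sommers \cite{Som97}. The oasitic hypothesis combined with $Q(\alpha^\vee)=-1$ on long roots is designed so that $Y_{Q,n}=nY$ (no extra contribution from the center/fundamental-group torsion), giving $\msc{X}_{Q,n}=Y/nY$ as $W$-sets, and Sommers' character computation then reads
\[
\chi_{\sigma_\msc{X}}(w)=n^{\dim V^w},\qquad V=Y\otimes_\Z\Q.
\]
Feeding this into the previous display and applying the signed Shephard--Todd--Chevalley identity
\[
\sum_{w\in W}\det(w)\,t^{\dim V^w}=\prod_{j=1}^{r}(t-m_j),
\]
at $t=n$ yields $\dim\Wh_\psi(\Theta(\wt{G}^{(n)}))=\val{W}^{-1}\prod_{j=1}^{r}(n-m_j)$. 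The remaining equality follows from Brieskorn's theorem \cite{Bri73}, $P(\msc{M}_{\rm AB},t)=\prod_{j=1}^{r}(1+m_j t)$: evaluating at $t=-n^{-1}$ and multiplying by $n^{r}$ gives $n^{r}\cdot {\rm EP}(\msc{M}_{\rm AB},-n^{-1})=\prod_{j=1}^{r}(n-m_j)$.

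The main obstacle is the first step: pinning down $\Theta(\wt{G}^{(n)})^I$ as the specific simple $\HH_I$-module representing the ``sign / regular'' component, and showing that under the $f_{02}$-description of $\mca{V}$ its pairing with $\mca{V}$ computes precisely the sign-twisted character average (equivalently, the count of regular $W$-orbits on $\msc{X}_{Q,n}$). This requires a careful translation between the principal-series realization of $\Theta(\wt{G}^{(n)})$ and the permutation-module realization of $\mca{V}$, and is also the place where the oasitic hypothesis does real work (ruling out non-regular contributions coming from torsion in $Y/Y_{Q,n}$). Once this is in place, Steps 2 and 3 are essentially invocations of \cite{Som97}, \cite{Bri73}, and classical reflection-group combinatorics.
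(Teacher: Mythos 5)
Your route is valid and gives the correct answer, but the final leg differs from what the paper does for the theta representation. Both proofs start from the identity $\dim\Wh_\psi(\Theta(\wt{G}^{(n)}))=\angb{\varepsilon_W}{\sigma_\msc{X}}_W$ (cited in the paper from \cite[Theorem 8.8]{GGK}; you reconstruct it via Frobenius reciprocity, parabolic stabilizers and M\"obius inversion, which is essentially what GGK prove). From there you evaluate the inner product directly by plugging in Sommers' character formula $\chi_{\sigma_\msc{X}}(w)=n^{\dim V^w}$ (\cite[Proposition 3.9]{Som97}) and the signed Orlik--Solomon/Shephard--Todd--Chevalley identity $\sum_{w}\det(w)\,t^{\dim V^w}=\prod_{j}(t-m_j)$. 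The paper instead decomposes $\sigma_\msc{X}=\bigoplus_{i}\mfr{m}_i(n)\Ind_{P_i}^W(\mbm{1}_{P_i})$ via \cite[Lemma 4.2]{Som97}, observes that only the trivial parabolic pairs nontrivially with $\varepsilon_W$, and then quotes \cite[Proposition 5.1]{Som97} for $\mfr{m}_k(n)=\omega(\mca{L},n)/\val{W}$. Interestingly, the paper's treatment of $\mathrm{St}(\wt{G}^{(n)})$ (second half of the proof of Theorem \ref{T:Whd}) proceeds exactly as you do here, so your approach has the merit of treating $\Theta$ and $\mathrm{St}$ uniformly via the character formula, at the cost of invoking the classical signed character identity rather than the representation-theoretic decomposition of $\sigma_\msc{X}$; both inputs ultimately rest on the same structural result of Brieskorn and Orlik--Solomon, which the paper states at the outset of the proof anyway.
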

The above dimension is related to the characteristic polynomial of the Coxeter hyperplane arrangment by results of Orlik--Solomon \cite{OrSo83}.  In Theorem \ref{T:Whd}, we also give an analogous formula for the Steinberg representation ${\rm St}(\wt{G}^{(n)})$ in terms of ${\rm EP}(\msc{M}_{\rm AB}, X)$. This yields the link $f_{01}$.  Moreover, by an alternative characterization of stability, Theorem \ref{T:M1} enables us to determine when $\HH_I \subseteq \mca{V}$ as $\HH_I$-modules. This provides the connection $f_{12}$ in \eqref{PIC}.

Now we turn to the main result in our paper. Arising from the Gelfand--Graev module $\mca{V}$ is the functor
$$ \FF_{\rm GG}:   \mca{M}_L(\widehat{\HH}_I) \longrightarrow \mca{M}_L( \End_{\widehat{\mca{H}}_I}(\mca{V}^{*}))$$
given by 
$$\pi \mapsto \mathrm{Hom}_{\widehat{\HH}_{I}}(\mca{V},\pi)\simeq \mca{V}^{*}\otimes_{\widehat{\HH}_{I}}\pi,$$
where $\mca{V}^{*}=\mathrm{Hom}_{\widehat{\HH}_{I}}(\mca{V},\widehat{\HH}_{I})$. We will call this the Gelfand--Graev functor. This functor is closely related to the Whittaker functor
\begin{equation*}
\FF_{\rm Wh}:   \mca{M}_R(\widehat{\HH}_I) \longrightarrow \mca{M}_L( \End_{\widehat{\mca{H}}_I}(\mca{V}^{*}))
\end{equation*}
given by 
\begin{equation*}
\pi\mapsto \mathrm{Hom}_{\widehat{\HH}_{I}}(\mca{V},\pi^{\vee})\simeq \mca{V}^{*}\otimes_{\widehat{\HH}_{I}}\pi^{\vee},
\end{equation*}
where $\pi^\vee$ is canonically viewed as left $\widehat{\HH}_I$-module with action specified by $\angb{v}{h\cdot \lambda} = \angb{v\cdot h}{\lambda}$ for $v\in \pi, \lambda \in \pi^\vee$ and $h \in \widehat{\HH}_I$.

In \S \ref{S:SW}, we recall the quantum affine Schur--Weyl functor $\mca{F}_{\rm SW}$ for $\GL_r$ which arises from the vector space $\msc{V}_{\rm SW}\simeq (\C[\Z])^{\otimes r}$ endowed with commuting actions
\begin{equation} \label{c-act}
U_q(\hat{\mfr{sl}}(m)) \curvearrowright \msc{V}_{\rm SW}^{[m]} \curvearrowleft \HH(S_r^\aff)
\end{equation}
for general $m, r \in \N$. We highlight that both actions of $\HH(S_r^\aff)$ and $U_q(\hat{\mfr{sl}}(m))$ on $\msc{V}_{\rm SW}$ depends on $m$. Thus, we use $\msc{V}_{\rm SW}^{[m]}$ in \eqref{c-act} and in the paper to emphasize this dependence. Also, the presence of $\sigma_\msc{X}$ in describing the action of $\HH(S_r^\aff)$ on $\msc{V}_{\rm SW}^{[m]}$ is clear by a simple translation of terminology from that in \cite{Gre99, DDF12, Ant} for example, see the proof of Theorem \ref{T:SW=Wh}. This gives the link $f_{03}$.

In any case, one has a natural functor
$$\mca{F}_{\rm SW}: \mca{M}_L(\HH(S_r^\aff))  \to \mca{M}_L(\End_{\HH(S_{r}^{\aff})}(\mathscr{V}_{\rm SW}^{[m]}))$$
given by $\sigma\mapsto \mathscr{V}_{\rm SW}^{[m]}\otimes_{\HH(S_{r}^{\aff})} \sigma$. As a consequence of \eqref{c-act}, 
we have the natural algebra homomorphism
  $$\Psi: U_q(\hat{\mfr{sl}}(m)) \longrightarrow \End_{\HH(S_r^\aff)}(\msc{V}_{\rm SW}^{[m]})$$
which induces a functor
  $$\Psi^*: \mca{M}_L(\End_{\HH(S_{r}^{\aff})}(\mathscr{V}_{\rm SW}^{[m]})) \longrightarrow \mca{M}_L(U_q(\hat{\mfr{sl}}(m))).$$
We set $\mca{F}_{\rm SW}^\Psi:= \Psi^* \circ \mca{F}_{\rm SW}$, which clearly arises from \eqref{c-act}. Note that $\mca{F}_{\rm SW}, \Psi$ and $\mca{F}_{\rm SW}^\Psi$ all depend on $m$.

Our main result (Theorem \ref{T:SW=Wh}) is that for certain covers of type A groups the two functors $\mca{F}_{\rm GG}, \mca{F}_{\rm SW}$ are closely related, where we take $m=n_\alpha$ for the latter functor. More precisely,  for covers of $\wt{\GL}_r^{(n)}$ of type (\hyperref[C1]{C1}) (see \S \ref{S:FWh}), one has a natural algebra isomorphism
$$\phi_\theta: \mca{H}(S_r^\aff) \rightarrow \widehat{\HH}_{I},$$
which enables us to compare $\mca{V}^{*}$ and $\msc{V}_{\rm SW}^{[n_\alpha]}$. For this, we show in Proposition \ref{P:IM} that
\begin{equation} \label{IM-2V}
 \mca{V}^{*}  \simeq (\phi_\theta^{-1})^*\circ{\rm IM}^*(\msc{V}_{\rm SW, \theta}^{[n_\alpha]})
 \end{equation}
as right $\widehat{\HH}_I$-modules, where IM stands for the Iwahori--Mastumoto involution on $\HH(S_r^\aff)$. 
This enables us to give a natural identification $\varphi_\theta: \End_{\widehat{\HH}_I}(\mca{V}^{*}) \to \End_{H(S_r^\aff)}(\msc{V}_{\rm SW}^{[n_\alpha]})$. As mentioned, the main result in \S \ref{S:SW} is

\begin{thm}[Theorem \ref{T:SW=Wh}]\label{T:M2}
Let $\wt{G}=\wt{\GL}_r^{(n)}$ be a cover of $\GL_r$ of type (\hyperref[C1]{C1}).  Then for every $\pi\in \mca{M}_L(\widehat{\HH}_I)$, one has
\begin{equation} \label{E:SW=Wh}
\mca{F}_{\rm SW} \circ {\rm IM}^* \circ \phi_\theta^*(\pi) \simeq \varphi_\theta^* \circ \mca{F}_{\rm GG}(\pi),
\end{equation}
where $\mca{F}_{\rm SW}$ is associated with $m=n_\alpha$.
As a diagram of commuting functors we have:
$$\begin{tikzcd}
\mca{M}_L(\widehat{\HH}_I) \ar[d, "{\rm{IM}^{*}\circ\phi_\theta^*}"]  \ar[r, "\mathcal{F}_{\rm{GG}}"] & \mca{M}_L(\End_{\widehat{\mca{H}}_I}(\mca{V})) \ar[d, "{\varphi_\theta^*}"] \\
\mca{M}_L(\HH(S_r^\aff)) \ar[r, "{\mca{F}_{\rm SW}}"] &  \mca{M}_L(\End_{\HH(S_r^\aff)}(\msc{V}_{\rm SW}^{[n_\alpha]}))
\end{tikzcd}$$
\end{thm}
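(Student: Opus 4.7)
The plan is to reduce Theorem \ref{T:M2} to Proposition \ref{P:IM} by applying two standard base-change identities: one for tensor products under an algebra isomorphism, and one for tensor products under an algebra involution. Both functors in question are tensor functors,
$$\mca{F}_{\rm GG}(\pi)\simeq \mca{V}^{*}\otimes_{\widehat{\HH}_{I}}\pi,\qquad \mca{F}_{\rm SW}(\sigma)\simeq \msc{V}_{\rm SW}^{[n_\alpha]}\otimes_{\HH(S_r^\aff)}\sigma,$$
so the whole problem reduces to exhibiting a natural isomorphism of vector spaces between the two sides of \eqref{E:SW=Wh} that is compatible with the action of $\End_{\HH(S_r^\aff)}(\msc{V}_{\rm SW}^{[n_\alpha]})$ (where the right-hand side of \eqref{E:SW=Wh} carries this action via pullback along $\varphi_\theta$).

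The key input is equation \eqref{IM-2V}: as right $\widehat{\HH}_I$-modules,
$$\mca{V}^{*}\simeq (\phi_\theta^{-1})^{*}\circ{\rm IM}^{*}(\msc{V}_{\rm SW,\theta}^{[n_\alpha]}).$$
Substituting this into the definition of $\mca{F}_{\rm GG}(\pi)$, I would then apply two formal identities. First, for an algebra isomorphism $\phi:A\to B$, a right $A$-module $M$ and a left $B$-module $N$, one has $(\phi^{-1})^{*}M\otimes_{B} N \simeq M\otimes_{A}\phi^{*}N$ via $m\otimes n\mapsto m\otimes n$; applied with $\phi=\phi_\theta$ this transfers the tensor product from $\widehat{\HH}_I$ to $\HH(S_r^\aff)$. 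Second, for an involution $\iota$ of $A$ (here $\iota={\rm IM}$), $\iota^{*}M\otimes_{A}N\simeq M\otimes_{A}\iota^{*}N$. Combining these gives
$$\mca{V}^{*}\otimes_{\widehat{\HH}_I}\pi \ \simeq\ \msc{V}_{\rm SW,\theta}^{[n_\alpha]}\otimes_{\HH(S_r^\aff)}{\rm IM}^{*}\phi_\theta^{*}(\pi) \ =\ \mca{F}_{\rm SW}\bigl({\rm IM}^{*}\phi_\theta^{*}(\pi)\bigr),$$
which is the underlying vector space identification asserted by \eqref{E:SW=Wh}, modulo the $\theta$-twist relating $\msc{V}_{\rm SW,\theta}^{[n_\alpha]}$ and $\msc{V}_{\rm SW}^{[n_\alpha]}$, which is already absorbed into the definitions of $\phi_\theta$ and $\varphi_\theta$.

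To upgrade this to an isomorphism of left $\End_{\HH(S_r^\aff)}(\msc{V}_{\rm SW}^{[n_\alpha]})$-modules rather than just vector spaces, I would trace the action of $\End_{\widehat{\HH}_I}(\mca{V}^{*})$ through the chain of identifications and verify that it matches, under $\varphi_\theta$, the action of $\End_{\HH(S_r^\aff)}(\msc{V}_{\rm SW}^{[n_\alpha]})$; by construction $\varphi_\theta$ is precisely the isomorphism induced on endomorphism algebras by the bimodule isomorphism of Proposition \ref{P:IM}, so this is automatic from naturality. The main obstacle is purely bookkeeping: keeping straight the left/right conventions (since $\widehat{\HH}_I$-modules are obtained from $\HH_I$-modules via the antiinvolution $\widehat{\phantom{X}}$, $\msc{V}_{\rm SW}^{[n_\alpha]}$ is naturally a right $\HH(S_r^\aff)$-module and a left $U_q(\hat{\mfr{sl}}(n_\alpha))$-module, and IM is an algebra involution) and handling the $\theta$-twist consistently. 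None of the formal algebra in the three identifications above is difficult; the substantive content is hidden in Proposition \ref{P:IM}, which establishes the bimodule isomorphism \eqref{IM-2V} in the first place.
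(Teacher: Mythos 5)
Your proposal is correct and is essentially the paper's own proof: both reduce to the bimodule isomorphism of Proposition \ref{P:IM} together with the two formal tensor-transfer identities (one for the algebra isomorphism $\phi_\theta$, one for the involution $\mathrm{IM}$). The only difference is that you trace the chain of identifications from $\mca{F}_{\rm GG}(\pi)$ toward $\mca{F}_{\rm SW}$, while the paper starts from the $\mca{F}_{\rm SW}$ side, which is purely cosmetic.
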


Since $\msc{V}_{\rm SW}^{[n_\alpha]}$ is a $(U_q(\hat{\mfr{sl}}(n_\alpha)),\HH(S_r^\aff))$-bimodule, $\mca{V}^{*}$ inherits a left $U_q(\hat{\mfr{sl}}(n_\alpha))$-action in view of \eqref{IM-2V}. This might be viewed as a process of quantization of the Whittaker functor: roughly speaking, one can further compose $\mca{F}_{\rm Wh}$ with $\Psi^*$. Following this, in \S \ref{SS:lsm-rm}  we utilize $\Psi, \mca{F}_{\rm SW}, \mca{F}_{\rm Wh}$ to identify the local scattering matrix associated with intertwining operators on genuine principal series $\tilde{\mbm{I}}_\theta(\chi_s), s=(s_1, ..., s_r) \in \C^r$ of $\wt{\GL}_r^{(n)}$ with an $R$-matrix, up to a flipping map. See Theorem \ref{T:sca} and the subsequent discussion. 

The result in Theorem \ref{T:sca} was already proved by Brubaker--Buciumas--Bump \cite{BBB19} by showing that the $R$-matrix (suitably twisted) gives a solution to the Yang--Baxter equation of metaplectic ice model, and it coincides with the local scattering matrix; thus, a natural map was defined from $\mca{F}_{\rm{GG}}(\tilde{\mbm{I}}_\theta(\chi_s)^I)$ to the quantum space 
$$V(n_\alpha s_1)\otimes ... \otimes V(n_\alpha s_r),$$
where $V(u)$ is the shifted standard evaluation $U_q(\hat{\mfr{sl}}(n_\alpha))$-module associated with any $u \in \C$, see Definition \ref{D:eva}. The map is equivariant with respect to intertwiners and homomorphisms associated with $R$-matrices. 

Here we give a reproof of \cite[Theorem 1]{BBB19}. Indeed, the functor $\mca{F}_{\rm SW}^\Psi$ essentially provides a natural substitute for the above map. Also, the functorial and monoidal properties of $\mca{F}_{\rm SW}^\Psi$, coupled with its relation with $\mca{F}_{\rm GG}$ (Theorem \ref{T:M2} above),  imply that the local scattering matrix represents a $U_q(\hat{\mfr{sl}}(n_\alpha))$-homomorphism. This gives Theorem \ref{T:sca}. The above results, especially the aforementioned ``quantization", give the connection $f_{23}$ in \eqref{PIC}.

In view of the relation between $\mca{F}_{\rm GG}, \mca{F}_{\rm Wh}$ and $\mca{F}_{\rm Wh}$, the polynomial ${\rm EP}(\msc{M}_{\rm AB}, X)$ determines the faithfulness of $\mca{F}_{\rm SW}$, see Remark \ref{R:GL}. This gives the connection $f_{13}$, which is in effect the composite of $f_{12}$ and $f_{23}$.

In the last section \S \ref{S:rmk}, we add several remarks regarding existing generalizations of $\mca{F}_{\rm SW}^\Psi$ to general Cartan types in the literature. Inevitably, we are not able to give exhaustive references on those deep topics, but can only content ourselves with a brief mention which pertains to our discussions above.

\subsection{Acknowledgement} We would like to thank Valentin Buciumas and Dennis Gaitsgory for some communications on an earlier version of the paper. We are especially grateful to Buciumas for very helpful comments and clarifications on the earlier work relevant to the topics discussed here. The work of F.\ G. is partially supported by the National Key R\&D Program of China (No. 2022YFA1005300) and also by NNSFC--12171422.

\section{Stability and the Gelfand--Graev module} \label{S:FWh}

In this section, let $\wt{G}=\wt{G}^{(n)}$ be a Brylinski--Deligne cover of $G$. Recall the bilinear form $B_Q(y, z):=Q(y+z) - Q(y) - Q(z)$ derived from $Q$ and the Weyl group stable sublattice
$$Y_{Q,n}=\set{y\in Y: B_Q(y, z) \in n\Z \text{ for all } z \in Y}.$$
The Weyl group $W$ acts naturally on $\msc{X}_{Q,n}=Y/Y_{Q,n}$. The $\iota$-genuine Iwahori--Hecke algebra $\HH_I$ can be viewed as the extended affine Hecke algebra associated with $Y_{Q,n} \rtimes W$. It contains the sub-algebra $\HH_W:=C_{c, \iota}^\infty(I \backslash \wt{K} /I)$, which is associated with $W$.

\begin{dfn} \label{D:Xspl}
An orbit $\mca{O}_y \subset \msc{X}_{Q,n}$ is called $\msc{X}$-splitting if the $W$-equivariant quotient map $f: Y \onto \msc{X}_{Q,n}$ has a $W$-equivariant section over $\mca{O}_y$.  A covering group $\wt{G}^{(n)}$ is called $\msc{X}$-splitting if every orbit in $\msc{X}_{Q,n}$ is $\msc{X}$-splitting, i.e., if the quotient map $f: Y \onto \msc{X}_{Q,n}$ has a $W$-equivariant section. 
\end{dfn}

Consider $\GL_r$ with lattice $Y$ given with the standard $\Z$-basis $\set{e_i: 1\lest i \lest r}$. A Brylinski--Deligne cover $\wt{\GL}_r^{(n)}$ of $\GL_r$ is associated with $\mbf{p}, \mbf{q}\in \Z$ such that
$$
B(e_i, e_j) =
\begin{cases}
2\mbf{p} & \text{ if } i =j,\\
\mbf{q} & \text{ if } i \ne j.
\end{cases}
$$
We have $Q(\alpha^\vee)=2\mbf{p} - \mbf{q}$. The Kazhdan--Patterson covers are those with $Q(\alpha^\vee)=-1$, where $\mbf{p}$ corresponds to the parameter $c$ in \cite{KP}. Meanwhile, Savin's ``nice" cover is the one with $\mbf{p}=-1, \mbf{q}=0$.
These covers are $\msc{X}$-splitting by \cite[Example 6.5]{GGK}. 

The following two classes of Brylinski--Deligne covers, which are also $\msc{X}$-splitting covers, are our main interest in this paper.
\begin{enumerate}
\item[(C1)]\label{C1} A Brylinski--Deligne cover $\wt{\GL}_r^{(n)}$ satisfying 
$$n| \mbf{q}$$
is called of type (C1). Such covers satisfy the block commutativity, i.e., for partition $\mbf{r}=(r_1, ..., r_k)$ of $r$ with associated standard Levi subgroup $M_\mbf{r}=\GL_{r_1} \times ... \times \GL_{r_k}$, one has
an isomorphism of groups
$$\Big(\prod_i \wt{\GL}_{r_i}^{(n)}\Big)/H \to \wt{M}_{\mbf{r}}^{(n)},$$
where $H=\set{(\zeta_i)_i: \prod_i \zeta_i =1}$. Moreover, for such covers
$$Y_{Q,n} = n_\alpha Y \text{ with } n_\alpha = n/\gcd(2\mbf{p}, n).$$
These covers are $\msc{X}$-splitting by \cite[Example 6.5]{GGK}. 

As concrete examples, recall that the Savin covers are Brylinski--Deligne covers with $(\mbf{p}, \mbf{q})=(-1, 0)$, and thus are of type (C1). On the other hand, for Kazhdan--Patterson covers to be of type (C1), $n$ is necessarily odd.
\item[(C2)]\label{C2} Let $G$ be semisimple and simply-connected. In \cite[Corollary 6.4]{GGK} we show that all oasitic covers (see Definition 6.1 of loc. cit.) of $G$ are $\msc{X}$-splitting. And in the tables of \cite[\S 6.1]{GGK}, we explicitly list the $n$'s such that an $n$-fold cover of such $G$ is oasitic.
\end{enumerate}

Now assume $\wt{G}$ is an $\msc{X}$-splitting cover. For every $W$-orbit $\mca{O}_y\subset \msc{X}_{Q,n}$, one may view $\mca{O}_y \subset Y$ such that 
$$W_y:={\rm Stab}_W(y, Y) ={\rm Stab}_W(y, \msc{X})$$ is a parabolic Weyl subgroup of $W$. Let 
$$\HH_{W_y} \subset \HH_W \subset \HH_I$$
be the subalgebra associated with $W_y$. Let $\varepsilon_{W_y}, \mbm{1}_{W_y}$ be the sign and trivial character of $\HH_{W_y}$ respectively. In \cite[Theorem 5.18]{GGK} we show that
\begin{equation} \label{E:VI}
\mca{V} \simeq \bigoplus_{\mca{O}_y \subset \msc{X}_{Q,n}} \varepsilon_{W_y} \otimes_{\HH_{W_y}} \HH_I
\end{equation}
as a right $\HH_I$-module. 

Set $\set{L, R} = \set{\Diamond, \Diamond'}$. For every $\iota^\pm$-genuine smooth representation $\pi$ of $\wt{G}$ with $\Diamond$-action, we denote by $\pi^\vee$ its contragredient representation which is $\iota^\pm$-genuine but with $\Diamond'$-action. More precisely, if $\Diamond=L$ for example, then $\pi^\vee$ consists of smooth functionals of $\pi$ with respect to the right action of $\wt{G}$ specified by
$$\angb{v}{ \lambda \cdot \pi^\vee(g)} = \angb{\pi(g) v}{\lambda}$$
for every $v \in \pi$ and $\lambda \in \pi^\vee$. 
For every $\iota$-genuine left $\wt{G}$-representation $\pi$, the $\psi$-Whittaker space of $\pi$ is given by
$$\Wh_\psi(\pi):=\Hom_{\wt{G}}({\rm ind}_{\mu_n \times U^-}^{\wt{G}} \iota^{-1} \times \psi^{-1}, \pi^\vee).$$

\begin{dfn} \label{D:stab}
Let $\wt{G}$ be an $\msc{X}$-splitting $n$-fold cover. The degree $n$ and also $\wt{G}$ are called stable if the following equivalent conditions are satisfied:
\begin{enumerate}
\item[(i)] there is a free $W$-orbit in $\msc{X}_{Q,n}$;
\item[(ii)] $\HH_I$ is a direct summand of $\mca{V}$ as $\HH_I$-modules;
\item[(iii)] every unramified theta representation $\Theta(\wt{G}, \chi)$ is generic,  i.e., $\Wh_\psi(\Theta(\wt{G}, \chi))\ne 0$.
\end{enumerate}
\end{dfn}
Here a theta representation $\Theta(\wt{G}, \chi)$ is the Langlands quotient of the genuine principal series $I(\chi)$ associated with an ``exceptional"  genuine central character $\chi$ of $Z(\wt{T})$, see \cite{KP, Ga2}. By \eqref{E:VI}, we have $\mca{O}_y \subset \msc{X}_{Q,n}$ is a free $W$-orbit if and only if $\mca{V}_{\mca{O}_y} \simeq \HH_I$, and this immediately gives the equivalence between (i) and (ii). The equivalence of (i) and (iii) is discussed in the proof of \cite[Theorem 8.8]{GGK}. It is easy to see that if $n$ is big enough, then it is stable since Definition \ref{D:stab} (i) is satisfied.

\begin{prop} \label{P:Meq}
Let $\wt{G}^{(n)}$ be an $\msc{X}$-splitting cover. If $n$ is stable, then $\mca{V}$ is a progenerator of $\mca{M}_R(\HH_I)$. In this case, the functor $\mathrm{Hom}_{\HH_{I}}(\mathcal{V},-):\mathcal{M}_{R}(\mathcal{H}_{I})\rightarrow \mathcal{M}_{R}(\End_{\HH_{I}}(\mca{V}))$ gives an equivalence of categories, and thus $\HH_I$ and $\End_{\HH_I}(\mca{V})$ are Morita-equivalent.
\end{prop}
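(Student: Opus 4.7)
The plan is to verify that $\mca{V}$ is a progenerator in $\mca{M}_R(\HH_I)$ and then invoke the classical Morita theorem. Recall a right module $P$ is a progenerator if it is finitely generated, projective, and a generator; given these three properties, $\Hom_{\HH_I}(P,-)$ realizes an equivalence of categories $\mca{M}_R(\HH_I) \simeq \mca{M}_R(\End_{\HH_I}(P))$ together with a Morita equivalence between $\HH_I$ and $\End_{\HH_I}(P)$.

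Finite generation of $\mca{V}$ is immediate from the decomposition \eqref{E:VI}: $\msc{X}_{Q,n}=Y/Y_{Q,n}$ is a finite set, so there are only finitely many $W$-orbits, and each summand $\varepsilon_{W_y}\otimes_{\HH_{W_y}}\HH_I$ is cyclic as a right $\HH_I$-module. For projectivity, each parabolic Iwahori--Hecke subalgebra $\HH_{W_y}$ of the finite parabolic Weyl group $W_y$ is semisimple at the positive real Hecke parameter $q_F$ (the Poincar\'e polynomial $P_{W_y}$ is positive at any $q>0$), so the one-dimensional sign module $\varepsilon_{W_y}$ is a direct summand of $\HH_{W_y}$ as a right $\HH_{W_y}$-module. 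Applying $-\otimes_{\HH_{W_y}}\HH_I$ then exhibits $\varepsilon_{W_y}\otimes_{\HH_{W_y}}\HH_I$ as a direct summand of $\HH_I$ itself, hence projective. Summing over orbits, $\mca{V}$ is projective.

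For generation, the stability hypothesis and the equivalence (i)$\Leftrightarrow$(ii) of Definition \ref{D:stab} (noted immediately after the definition) supply a free $W$-orbit $\mca{O}_{y_0}$ with $W_{y_0}=\{1\}$, whose summand in \eqref{E:VI} is simply $\HH_I$. Thus $\HH_I$ is a direct summand of $\mca{V}$, and any surjection $\HH_I^{\oplus S}\onto M$ (coming from a choice of generators of $M$) lifts along the split projection $\mca{V}^{\oplus S}\onto \HH_I^{\oplus S}$ to a surjection $\mca{V}^{\oplus S}\onto M$ for every $M\in\mca{M}_R(\HH_I)$. Hence $\mca{V}$ is a generator. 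Combining these three properties shows $\mca{V}$ is a progenerator, and the classical Morita theorem delivers the desired equivalence and Morita equivalence.

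The principal technical subtlety is the projectivity of $\varepsilon_{W_y}$ over $\HH_{W_y}$; the semisimplicity route is the slickest, but one can alternatively construct an explicit primitive idempotent $e^{-}_{W_y}\in\HH_{W_y}$ (a $q_F$-deformation of the usual sign idempotent of $\C[W_y]$) with $\varepsilon_{W_y}\simeq e^{-}_{W_y}\HH_{W_y}$ as right $\HH_{W_y}$-modules, and argue identically. The only place stability enters is in the generation step; without it, $\HH_I$ need not embed as a summand of $\mca{V}$, and indeed the trivial summand corresponding to orbits with nontrivial stabilizer cannot produce all simple right $\HH_I$-modules as quotients.
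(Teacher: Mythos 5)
Your proof is correct and follows the same overall strategy as the paper: verify that $\mca{V}$ is finitely generated, projective, and a generator in $\mca{M}_R(\HH_I)$, then invoke the standard Morita theorem for progenerators. The only substantive difference is the projectivity step, where the paper cites an earlier lemma (\cite[Lemma 8.6]{GGK}) while you supply a self-contained argument via semisimplicity of the finite parabolic Hecke algebra $\HH_{W_y}$ at the positive real parameter, exhibiting $\varepsilon_{W_y}$ as a direct summand of $\HH_{W_y}$ and hence each orbit summand as a direct summand of $\HH_I$.
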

\begin{proof}
The $\HH_I$-module $\mca{V}$ is a progenerator if $\mca{V}$ is projective, finitely generated, and is a generator (see \cite[\S 1.5.1]{Roc09}). The module $\mca{V}$ is projective by \cite[Lemma 8.6]{GGK}. Since $\mca{V}$ is projective, $\mca{V}$ is a generator if and only if for any nonzero $\sigma \in \mca{M}_R(\HH_I)$, the space 
$$\Hom_{\HH_I}(\mca{V}, \sigma) \ne 0.$$
Since $n$ is stable, this follows from Definition \ref{D:stab} (ii). Clearly, $\mca{V}$ is finitely-generated in view of its structure given in \eqref{E:VI}. 

Since $\mca{V}$ is a progenerator, it follows from general category theory (see \cite[\S 1.5.1]{Roc09}) that $\mathrm{Hom}_{\HH_{I}}(\mathcal{V},-)$ gives an equivalence of categories. This completes the proof.
\end{proof}

\begin{eg} \label{E:GLsta}
Let $\wt{\GL}_{r}$ be any $\msc{X}$-splitting cover, for example of Kazhdan--Patterson or Savin type, or of type (\hyperref[C1]{C1}). It follows from Lemma 3.1 and Theorem 3.7 of \cite{GaTs} that $n$ is stable if and only if $n_\alpha \gest r$. In fact, for general $\wt{G}$, the results in \cite[\S 3.1--3.5, Theorem 3.7]{GaTs} already give a lower bound for stable $n$. 
\end{eg}

\begin{rmk}
We have that $\HH_I \simeq \HH(G^\sharp, I^\sharp)$, where $\HH(G^\sharp, I^\sharp)$ is the Iwahori--Hecke algebra of a linear algebra group $G^\sharp$, see \cite{Sav04} or the discussion in the paragraph of \eqref{phi-t} later.  If $\wt{G}^{(n)}$ is a stable $\msc{X}$-splitting cover, then the Morita equivalence between $\HH_I$ and $\End_{\HH_I}(\mca{V})$ also follows from the work of Vigneras \cite{Vig03}.  For every subset $J \subset \Delta$ of the simple roots $\Delta$, let $\mbf{v}^\varepsilon_J \in \HH(G^\sharp, I^\sharp)$ (resp. $\mbf{v}^\mbm{1}_J$) denote the idempotent corresponding to $\varepsilon_{\HH_{W(J)}}$ (resp. $\mbm{1}$). This gives
$$\msc{S}^\varepsilon:=\End_{\HH(G^\sharp, I^\sharp)}\Big( \bigoplus_{J \subset \Delta} \mbf{v}^\varepsilon_J \HH(G^\sharp, I^\sharp) \Big).$$
Its analogue $\msc{S}^\mbm{1}$ was studied in \cite{Vig03, MiSt19}. For stable $n$, the algebra $\End_{\HH_I}(\mca{V})$ and $\msc{S}^\varepsilon$ are Morita equivalent and thus might be both called affine $q$-Schur algebras, as is the case for $\msc{S}^\mbm{1}$ in \cite{Vig03, MiSt19}. However, we note that they are not identical since $\mca{V}_{\mca{O}_y}$, which corresponds to certain
$\mbf{v}^\varepsilon_{J_y} \HH(G^\sharp, I^\sharp)$, has multiplicities  in $\mca{V}$ in general; that is, it is possible that $J_y = J_{y'}$ for $\mca{O}_y \ne \mca{O}_{y'}$. Such multiplicities are given in Sommers' work \cite{Som97} for oasitic covers of semisimple simply-connected $G$, which we will use in \S \ref{S:EP}.
In type A, such multiplicities are also important in relating the genuine Gelfand--Graev functor to the quantum affine Schur--Weyl functor, see \S \ref{S:SW} below.
\end{rmk}

Later in this paper it will be convenient at times to view $\mathcal{V}$ as a left module. This can be done as follows. Let 
$$\widehat{\phantom{X}}:C^{\infty}(\overline{G})\rightarrow C^{\infty}(\overline{G})$$
 be the anti-involution defined by $\hat{f}(g):=f(g^{-1})$. We write $\widehat{\HH}_I$ for the image of $\HH_{I}$ under this map. Note that $\widehat{\HH}_{I}$ is the $\iota^{-1}$-genuine Iwahori--Hecke algebra of $\wt{G}$ with respect to convolution. 
Every $\pi \in \mca{M}_R(\HH_I)$ is canonically viewed as a left $\widehat{\HH}_I$-module with action defined by 
$\hat{h}\cdot v:= v \cdot h$ for $v\in \pi$ and $\hat{h} \in \widehat{\HH}_I$.
If $A\subset \HH_{I}$ is a subalgebra, we write $\widehat{A}\subset \HH_{I}$ for its image in $\widehat{\HH}_{I}$. By a direct calculation we get the following description of $\mathcal{V}$ as a left $\widehat{\mathcal{H}}_{I}$-module.

\begin{prop}
As a left $\widehat{\mathcal{H}}_{I}$-module 
\begin{equation}
\mathcal{V}\simeq \bigoplus_{\mathcal{O}_{y}\subset \msc{X}_{Q,n}}\widehat{\mathcal{H}}_{I}\otimes_{\widehat{\mathcal{H}}_{W_{y}}} \varepsilon_{W_{y}}.
\end{equation}
\end{prop}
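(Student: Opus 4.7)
The plan is to deduce this left $\widehat{\mathcal{H}}_I$-module description as a formal consequence of the right $\mathcal{H}_I$-module description in \eqref{E:VI}, by transporting through the anti-involution $\widehat{\phantom{X}}$.

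First, I would recall that any right $\mathcal{H}_I$-module $M$ is canonically a left $\widehat{\mathcal{H}}_I$-module via $\hat{h}\cdot v := v\cdot h$; this is the very definition used for $\mathcal{V}$ itself in the paragraph before the proposition. Under this identification, the right $\mathcal{H}_I$-module $\varepsilon_{W_y}\otimes_{\mathcal{H}_{W_y}}\mathcal{H}_I$ becomes a left $\widehat{\mathcal{H}}_I$-module, so the question reduces to the purely algebraic claim that
\[
\varepsilon_{W_y}\otimes_{\mathcal{H}_{W_y}}\mathcal{H}_I \;\simeq\; \widehat{\mathcal{H}}_I \otimes_{\widehat{\mathcal{H}}_{W_y}} \varepsilon_{W_y}
\]
as left $\widehat{\mathcal{H}}_I$-modules, after which summing over orbits $\mathcal{O}_y \subset \msc{X}_{Q,n}$ finishes the proof.

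For this reduction I would write down the candidate map $\Phi(1\otimes h):=\hat{h}\otimes 1$ and verify the three standard checks. Well-definedness over the tensor product needs $\widehat{h_w h}\otimes 1 = \varepsilon(w)\,\hat{h}\otimes 1$ in the right-hand tensor product for every $h_w = T_w \in \mathcal{H}_{W_y}$; this follows from the anti-multiplicativity $\widehat{h_w h}= \hat{h}\,\hat{h}_w$, together with the observation that under hat the sign character on $\mathcal{H}_{W_y}$ is carried to the sign character on $\widehat{\mathcal{H}}_{W_y}$ (because $\ell(w^{-1})=\ell(w)$, so $\varepsilon(w^{-1})=\varepsilon(w)$). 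Left $\widehat{\mathcal{H}}_I$-equivariance is the computation $\Phi(\hat{h}'\cdot(1\otimes h)) = \Phi(1\otimes hh') = \widehat{hh'}\otimes 1 = \hat{h}'\hat{h}\otimes 1 = \hat{h}'\cdot\Phi(1\otimes h)$, using again anti-multiplicativity of $\widehat{\phantom{X}}$. Finally, bijectivity follows because the analogous map in the opposite direction, $\hat{h}\otimes 1 \mapsto 1\otimes h$, is a two-sided inverse.

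The main potential obstacle is purely bookkeeping: one must carefully track which side of each tensor product the algebra acts on, and ensure that the sign character, which a priori is defined on $\mathcal{H}_{W_y}$, is being correctly pushed to $\widehat{\mathcal{H}}_{W_y}$. Once that is handled the proof is essentially mechanical, and I would present it in the two paragraphs above.
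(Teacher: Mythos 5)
Your proposal is correct and is precisely the ``direct calculation'' the paper invokes without spelling out: transport the right $\HH_I$-module decomposition \eqref{E:VI} through the anti-involution $\widehat{\phantom{X}}$, and verify that $\varepsilon_{W_y}\otimes_{\HH_{W_y}}\HH_I \simeq \widehat{\HH}_I\otimes_{\widehat{\HH}_{W_y}}\varepsilon_{W_y}$ as left $\widehat{\HH}_I$-modules via $1\otimes h\mapsto \hat{h}\otimes 1$. Your bookkeeping of the anti-multiplicativity and the observation that the sign character is preserved under the hat because $\ell(w^{-1})=\ell(w)$ are exactly the points that need checking.
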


This perspective of left-modules will be advantageous when considering the Whittaker functor. We define the Whittaker functor
$$\begin{tikzcd}
\FF_{\rm Wh}:   \mca{M}_R(\widehat{\HH}_I) \ar[r] & \mca{M}_L( \End_{\widehat{\mca{H}}_I}(\mca{V})),
\end{tikzcd}$$
by
$$\FF_{\rm Wh}(\pi):= \Hom_{\widehat{\mca{H}}_I}(\mca{V}, \pi^{\vee}).$$
Here we view $\pi^\vee \in \mca{M}_L(\widehat{\HH}_L)$ with action specified by  $\angb{v}{h\cdot \lambda} = \angb{v \cdot h}{\lambda}$ for all $v\in \pi, \lambda \in \pi^\vee$ and $h\in \widehat{\HH}_I$.
The action of  $a \in \End_{\widehat{\HH}_I}(\mca{V})$ on $f\in \Hom_{\widehat{\HH}_I}(\mca{V}, \pi^\vee)$ is given by
$$(a\cdot f)= f \circ a.$$
Proposition \ref{P:Meq} also applies when $\mathcal{V}$ is viewed as a left $\widehat{\HH}_{I}$-module. In particular, $\mathcal{V}$ is a finitely generated projective left $\widehat{\HH}_{I}$-module. Thus by \cite[Page 271]{BouA1} we have
\begin{equation*}
\Hom_{\widehat{\HH}_I}(\mca{V}, \pi^\vee)\simeq \mca{V}^{*}\otimes_{\widehat{\HH}_{I}} \pi^{\vee},
\end{equation*}
where $\mca{V}^{*}=\mathrm{Hom}_{\widehat{\HH}_{I}}(\mca{V},\widehat{\HH}_{I})$ is the dual module with its natural right $\widehat{\HH}_{I}$-module structure.
Let 
$$\mca{F}_{\rm GG}:\mca{M}_{L}(\widehat{\HH}_{I})\rightarrow \mca{M}_{L}(\mathrm{End}_{\widehat{\HH}_{I}}(\mca{V}^*))$$ be the functor defined by 
\begin{equation*}
\mca{F}_{\rm GG}(\pi):=\mca{V}^{*}\otimes_{\widehat{\HH}_{I}} \pi.
\end{equation*}
We call this the Gelfand--Graev functor. It is clear $\mca{F}_{\mathrm{Wh}}(\pi)=\mca{F}_{\rm GG}(\pi^{\vee})$.

We end this section with a lemma collecting some identities to be used later in the paper.

\begin{lm} \label{L:V-mirr}
Let $\mathcal{O}_{y}\subset \msc{X}_{Q,n}$ be a splitting $W$-orbit. Let $\chi$ be a character of $\widehat{\mca{H}}_{W_{y}}$ and let $e_{\chi}\in \widehat{\mca{H}}_{W_{y}}$ be the idempotent associated with $\chi$.
\begin{enumerate}
\item[(i)] The map $\chi\otimes_{\widehat{\HH}_{W_{y}}}\widehat{\HH}_{I}\rightarrow e_{\chi}\widehat{\HH}_{I}$ defined by $1\otimes h\mapsto e_{\chi}h$ is an isomorphism of right $\widehat{\HH}_{I}$-modules.
\item[(ii)] The map $e_{\chi}\widehat{\HH}_{I}\rightarrow (\widehat{\HH}_{I}\otimes_{\widehat{\HH}_{W_{y}}} \chi)^{*}$ defined by $e_{\chi}h\mapsto (h^{\prime}\otimes 1\mapsto h^{\prime}e_{\chi}h)$ is an isomorphism of right $\widehat{\HH}_{I}$-modules.
\item[(iii)] Suppose that $\overline{G}$ is splitting. If we view $\mathcal{V}$ as a left $\widehat{\HH}_{I}$-module, then the dual module $\mathcal{V}^{*}$ is a right $\widehat{\HH}_{I}$-module and
\begin{equation*}
\mca{V}^{*}\simeq \bigoplus_{\mathcal{O}_{y}\subset \msc{X}_{Q,n}} \varepsilon_{y} \otimes_{\widehat{\HH}_{W_{y}}}\widehat{\HH}_{I}.
\end{equation*}
\end{enumerate}
\end{lm}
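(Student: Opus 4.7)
The plan is to prove the three parts in order, with (i) and (ii) supplying the algebraic ingredients for (iii). Throughout I will work with the primitive idempotent $e_{\chi}\in \widehat{\HH}_{W_y}$ attached to the one-dimensional character $\chi$, whose defining properties $w e_{\chi} = e_{\chi} w = \chi(w)e_{\chi}$ for $w\in \widehat{\HH}_{W_y}$ and $\chi(e_{\chi})=1$ are the engine of the whole argument.

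For (i), I will check that the displayed map is a bijection by exploiting these idempotent identities. Well-definedness of $1\otimes h\mapsto e_{\chi}h$ and right $\widehat{\HH}_{I}$-linearity are immediate from $e_{\chi}w=\chi(w)e_{\chi}$. Surjectivity is clear. For injectivity, every simple tensor satisfies $1\otimes h = 1\otimes e_{\chi}h$, obtained by pushing $e_{\chi}$ across the tensor and using $\chi(e_{\chi})=1$; consequently, if $e_{\chi}h=0$ then $1\otimes h = 1\otimes 0 = 0$, so the kernel is zero.

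For (ii), I will identify $(\widehat{\HH}_{I}\otimes_{\widehat{\HH}_{W_y}}\chi)^{*}$ with $e_{\chi}\widehat{\HH}_{I}$ via the universal property. A left $\widehat{\HH}_{I}$-linear map $f$ out of $\widehat{\HH}_{I}\otimes_{\widehat{\HH}_{W_y}}\chi$ is determined by $x:=f(1\otimes 1)\in \widehat{\HH}_{I}$, and the balancing relation $w\otimes 1 = \chi(w)(1\otimes 1)$ translates into $wx=\chi(w)x$ for every $w$. This locus is exactly $e_{\chi}\widehat{\HH}_{I}$: the forward inclusion uses $we_{\chi}=\chi(w)e_{\chi}$, and the reverse uses $x = \chi(e_{\chi})x = e_{\chi}x$. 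Right $\widehat{\HH}_{I}$-equivariance of the formula $e_{\chi}h\mapsto (h'\otimes 1\mapsto h'e_{\chi}h)$ is a one-line check from $(f\cdot a)(m)=f(m)a$.

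For (iii), I will combine the preceding decomposition $\mca{V}\simeq\bigoplus_{\mca{O}_{y}\subset \msc{X}_{Q,n}}\widehat{\HH}_{I}\otimes_{\widehat{\HH}_{W_y}}\varepsilon_{W_y}$ of $\mca{V}$ as a left $\widehat{\HH}_{I}$-module with the distributivity of $(\cdot)^{*}$ over finite direct sums. Applying (ii) summand-wise with $\chi=\varepsilon_{W_y}$ gives $\mca{V}^{*}\simeq \bigoplus_{\mca{O}_{y}} e_{\varepsilon_{W_y}}\widehat{\HH}_{I}$, and then (i) rewrites each summand as $\varepsilon_{W_y}\otimes_{\widehat{\HH}_{W_y}}\widehat{\HH}_{I}$, yielding the claim. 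I do not anticipate a serious obstacle: the only substantive input is the identification in (ii) of the $\chi$-isotypic locus with $e_{\chi}\widehat{\HH}_{I}$, which relies on $\widehat{\HH}_{W_y}$ carrying a well-defined primitive idempotent for the one-dimensional character $\chi$; beyond this, the lemma is an organized sequence of elementary identities, and the main care required is bookkeeping of left- versus right-module structures.
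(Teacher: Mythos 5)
Your proof is correct and complete. Since the paper states Lemma \ref{L:V-mirr} without an accompanying proof (the section closes immediately after the statement), there is no argument in the paper to compare against; the route you take — the two standard idempotent computations in (i)--(ii) establishing $\chi\otimes_{\widehat{\HH}_{W_y}}\widehat{\HH}_I \simeq e_\chi\widehat{\HH}_I \simeq (\widehat{\HH}_I\otimes_{\widehat{\HH}_{W_y}}\chi)^*$, followed by applying $(-)^*$ to the finite direct-sum decomposition of $\mca{V}$ from the preceding proposition — is the natural one and is the argument the authors evidently have in mind.
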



\section{An Euler--Poincar\'e polynomial and stability} \label{S:EP}

In this section, we consider exclusively a semisimple simply-connected $G$ and an oasitic cover $\wt{G}^{(n)}$ of $G$ (see \cite[Definition 6.1]{GGK}), such that for any short coroot $\alpha^\vee$,
$$Q(\alpha^\vee)=-1.$$
These are covers of type (\hyperref[C2]{C2}) in \S \ref{S:FWh}.

The unramified genuine exceptional characters $\chi$ of $Z(\wt{T})$ form a torsor over $Z(\wt{G}^\vee)$, which is trivial for oasitic covers; thus, there is a unique theta representation $\Theta(\wt{G}^{(n)})$, the unique irreducible quotient of $I(\chi)$ for the unique exceptional character $\chi$. The unique irreducible subrepresentation ${\rm St}(\wt{G}^{(n)})$ of $I(\chi)$ is the analogue of the Steinberg representation.

Using work of Sommers \cite{Som97}, we determine  $\dim \Wh_\psi(\Theta(\wt{G}^{(n)}))$ explicitly in terms of the Euler--Poincar\'e polynomial of the Coxeter hyperplane arrangement of $G$. This gives the stability condition for $n$ as well.

For semisimple simply-connected $G$, one has $Y=Y^{sc}$, which is $\Z$-spanned by the simple coroots. For each root $\alpha \in \Phi$, consider the hyperplane 
$$H_\alpha:=\set{v\in Y\otimes \R: \angb{v}{\alpha}=0}.$$
We have the set
$$\mca{A}:=\set{H_\alpha: \alpha\in \Phi}$$
of the full Coxeter hyperplane arrangements in $Y\otimes \R$. Let $\mca{L}=\mca{L}(\mca{A})$ be the set of intersections of hyperplanes in $\mca{A}$. We consider
$$Y\otimes \R \in \mca{L}$$
by taking the empty intersection of elements in $\mca{A}$. Elements in $\mca{L}$ are ordered, and we write $x < y$ if  $y \subseteq x$. We also use $x\lest y$ to represent $x<y$ or $x=y$. The Mobius function
$$\mu: \mca{L} \times \mca{L} \longrightarrow \Z$$
is uniquely defined by requiring $\mu(x, x)=1$,
$$\sum_{z:\ x\lest z\lest y}\mu(z, y)=0 \text{ if } x < y,$$
and $\mu(x, y)=0$ otherwise. The characteristic polynomial of $\mca{L}$ is thus given by
$$\omega(\mca{L}, X):=\sum_{x\in \mca{L}} \mu(Y\otimes \R, x) \cdot X^{\dim x}.$$

On the other hand, consider the Arnold--Brieskorn complex manifold
$$\msc{M}_{\rm AB}:= Y \otimes \C - \bigcup_{\alpha \in \Phi} H_\alpha\otimes \C$$
and its Euler--Poincar\'e polynomial
$${\rm EP}(\msc{M}_{\rm AB}, X):=\sum_{i\gest 0} \dim H^i(\msc{M}_{\rm AB}, \C) \cdot X^i.$$

\begin{thm} \label{T:Whd}
Let $\wt{G}^{(n)}$ be an oasitic cover of a semisimple simply-connected $G$ such that $Q(\alpha^\vee)=-1$ for any long root $\alpha$. Then one has
\begin{equation}  \label{E:Theta}
\dim \Wh_\psi(\Theta(\wt{G}^{(n)}))=  \frac{\omega(\mca{L}, n)}{\val{W}}= \frac{n^r\cdot {\rm EP}(\msc{M}_{\rm AB}, -n^{-1})}{\val{W}} = \val{W}^{-1} \cdot \prod_{j=1}^r (n-m_j),
\end{equation}
where $m_j, 1\lest j \lest r$ are the exponents of the Weyl group $W$. Similarly,
\begin{equation}  \label{E:St}
\dim \Wh_\psi({\rm St}(\wt{G}^{(n)}))=  \frac{(-1)^r \cdot \omega(\mca{L}, -n)}{\val{W}}= \frac{n^r\cdot {\rm EP}(\msc{M}_{\rm AB}, n^{-1})}{\val{W}} = \val{W}^{-1} \cdot \prod_{j=1}^r (n+m_j).
\end{equation}
\end{thm}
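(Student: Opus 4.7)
The plan is to reduce $\dim \Wh_\psi(\Theta)$ and $\dim \Wh_\psi(\mathrm{St})$ to orbit counts on $\msc{X}_{Q,n}$, evaluate those counts using Sommers' fixed-point formula together with classical Coxeter-group identities, and finally convert to the hyperplane-arrangement quantities via Orlik--Solomon. The oasitic hypothesis combined with $Q(\alpha^\vee) = -1$ on long coroots guarantees that stabilizers $W_y$ of points in $\msc{X}_{Q,n}$ are parabolic subgroups of $W$, which is what makes the combinatorics tractable.

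\emph{Step 1 (Reduction to an orbit count).} Since $\Theta(\wt{G}^{(n)})$ and $\mathrm{St}(\wt{G}^{(n)})$ are Iwahori-spherical, the Borel--Casselman correspondence gives
$$\dim \Wh_\psi(\pi) = \dim \Hom_{\HH_I}(\mca{V}, (\pi^\vee)^I)$$
for $\pi \in \{\Theta, \mathrm{St}\}$. Inserting the decomposition \eqref{E:VI} and invoking Frobenius reciprocity collapses this to $\sum_{\mca{O}_y} \dim \Hom_{\HH_{W_y}}\bigl(\varepsilon_{W_y}, (\pi^\vee)^I\vert_{\HH_{W_y}}\bigr)$. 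At the unramified exceptional central character, $\Theta^I$ is one-dimensional and restricts to the trivial character on every finite parabolic sub-Hecke algebra, while $\mathrm{St}^I$ restricts to the sign character. Consequently each summand contributes $1$ exactly when $W_y = 1$ in the theta case, and unconditionally in the Steinberg case, yielding
$$\dim \Wh_\psi(\Theta) = \#\{\mca{O}_y \subset \msc{X}_{Q,n} : W_y = 1\}, \qquad \dim \Wh_\psi(\mathrm{St}) = \#(W \backslash \msc{X}_{Q,n}).$$

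\emph{Step 2 (Counting).} Sommers' main result \cite{Som97}, applied in the oasitic setting, provides the fixed-point formula $\val{\msc{X}_{Q,n}^w} = n^{\dim V^w}$ for every $w \in W$, where $V = Y \otimes \R$. Burnside's lemma immediately gives $\dim \Wh_\psi(\mathrm{St}) = \val{W}^{-1}\sum_w n^{\dim V^w}$. For the free-orbit count, I use the vanishing $\sum_{w \in W_J}(-1)^{\ell(w)} = [W_J = 1]$ (which follows from the Poincar\'e polynomial of a nontrivial Coxeter group being divisible by $1+t$) and swap the order of summation to obtain
$$\val{\{y \in \msc{X}_{Q,n} : W_y = 1\}} = \sum_{w \in W}(-1)^{\ell(w)} \val{\msc{X}_{Q,n}^w} = \sum_{w \in W}(-1)^{\ell(w)} n^{\dim V^w}.$$
Applying Solomon's classical identities $\sum_w n^{\dim V^w} = \prod_j(n+m_j)$ and $\sum_w (-1)^{\ell(w)} n^{\dim V^w} = \prod_j(n-m_j)$ yields the rightmost expressions in \eqref{E:Theta} and \eqref{E:St}.

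\emph{Step 3 (Arrangement reformulation).} The Terao--Orlik factorization for the full Coxeter arrangement states $\omega(\mca{L}, X) = \prod_{j=1}^r(X - m_j)$, which matches the middle expressions on the nose. The Orlik--Solomon theorem $\mathrm{EP}(\msc{M}_{\rm AB}, X) = (-X)^r \omega(\mca{L}, -X^{-1})$ then gives $n^r\,\mathrm{EP}(\msc{M}_{\rm AB}, -n^{-1}) = \omega(\mca{L}, n)$ for the theta case and, after the substitution $X = n^{-1}$, the analogous identity $n^r\,\mathrm{EP}(\msc{M}_{\rm AB}, n^{-1}) = (-1)^r \omega(\mca{L}, -n)$ for Steinberg, closing the chain of equalities.

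\emph{Main obstacle.} The substantive technical input is Sommers' identity $\val{\msc{X}_{Q,n}^w} = n^{\dim V^w}$: I expect the main work to be verifying that the hypotheses ``oasitic" and ``$Q(\alpha^\vee) = -1$ on long coroots" place us precisely in the framework of \cite{Som97}, i.e., that $Y_{Q,n}$ can be identified with a lattice to which Sommers' fixed-point argument transfers verbatim (in particular that $Y^w \cap Y_{Q,n} = (Y_{Q,n})^w$ holds so that the count of $w$-fixed points is a pure power of $n$). A secondary but nonnegligible point is the identification of $\Theta^I$ and $\mathrm{St}^I$ with restrictions of the trivial and sign characters to each $\HH_{W_y}$; this will use the structure of the principal series $I(\chi)$ at the unramified exceptional character as developed in \cite{KP, Ga2}.
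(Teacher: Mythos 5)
Your proof is correct, and for the theta representation it takes a genuinely different route from the paper. Both you and the paper begin from the orbit-count identities $\dim\Wh_\psi(\Theta) = \#\{\text{free $W$-orbits in }\msc{X}_{Q,n}\}$ and $\dim\Wh_\psi(\mathrm{St}) = \#(W\backslash\msc{X}_{Q,n})$; the paper obtains these by citing \cite[Theorem 8.8]{GGK} directly, whereas your Step 1 sketches the Frobenius-reciprocity derivation behind them (your one-sentence claim that $\Theta^I$ restricts trivially and $\mathrm{St}^I$ restricts to sign on each $\HH_{W_y}$ is exactly the nontrivial structural input that \cite[Theorem 8.8]{GGK} packages; the claim rests on Savin's Shimura correspondence identifying $\Theta^I$, $\mathrm{St}^I$ with the trivial and sign characters of $\HH(G_{Q,n},I_{Q,n})$, so you should cite that rather than treat it as self-evident). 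Where you really diverge is the theta count: the paper reads off the number of free orbits as the multiplicity $\mfr{m}_k(n) = \omega(\mca{L},n)/|W|$ from Sommers' structural decomposition $\sigma_\msc{X} = \bigoplus_i \mfr{m}_i(n)\,\Ind_{P_i}^W(\mbm{1})$ \cite[Lemma 4.2, Proposition 5.1]{Som97}, then pairs with $\varepsilon_W$. You instead use only the fixed-point count $|\msc{X}_{Q,n}^w| = n^{d(w)}$ \cite[Proposition 3.9]{Som97} — the same input the paper already needs for the Steinberg half — together with the vanishing of the Poincar\'e polynomial of a nontrivial Coxeter group at $t=-1$ to turn $\sum_w (-1)^{\ell(w)} n^{d(w)}$ into a free-point count, and then Solomon's identity $\sum_w \det(w)\,t^{d(w)} = \prod_j(t-m_j)$. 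Your route is more economical (it requires only \cite[Proposition 3.9]{Som97}, not the orbit-multiplicity formula) and makes the theta and Steinberg computations visibly parallel Burnside-type sums, while the paper's citation of Sommers' Proposition 5.1 lands directly on the form $\omega(\mca{L},n)/|W|$ without passing through the factored polynomial. The arrangement-theoretic reformulations in your Step 3 via Orlik--Solomon and Brieskorn coincide with the paper's.
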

\begin{proof}
Before we begin, we note that we cite results from \cite{GGK} that include the assumption $Q(\alpha^{\vee})=1$, for short coroots. When $G$ is semisimple, these results also hold when $Q(\alpha^{\vee})=-1$, for short coroots.

First, we have
$$\omega(\mca{L}, X) = X^r \cdot {\rm EP}(\msc{M}_{\rm AB}, -X^{-1}) =\prod_{j=1}^r (X-m_j) \in \C[X],$$
where the first equality was a classical result of Orlik--Solomon \cite{OrSo83} and the second equality already follows from work of Brieskorn \cite[Th\'eor\`em 6]{Bri73}.  

It follows from \cite[Theorem 8.8]{GGK} that 
$$\dim \Wh_\psi(\Theta(\wt{G}^{(n)})) =\angb{\varepsilon_W}{ \sigma_\msc{X}}_W = \# \set{\text{free $W$-orbits in $\msc{X}_{Q,n}$}}.$$
Let $P_1, P_2, ..., P_k \subset W$ be the conjugacy classes of parabolic subgroups of $W$ such that
$$P_1 = W, \quad P_k=\set{1}.$$
Note that $\msc{X}_{Q,n} = Y^{sc}/nY^{sc}$. Since $\wt{G}^{(n)}$ is an oasitic cover (see \cite[\S 6.1]{GGK}), we see that $n$ is ``very good" in the terminology of \cite[Definition 3.5]{Som97}. Following this, we have from \cite[Lemma 4.2]{Som97} that 
$$\sigma_\msc{X}=\bigoplus_{i=1}^k \mfr{m}_i(n) \cdot \Ind_{P_i}^W(\mbm{1}_{P_i})$$
for well-defined $\mfr{m}_i(n) \in \Z_{\gest 0}$. Furthermore, it is shown in \cite[Proposition 5.1]{Som97}
that 
$$\mfr{m}_k(n) = \frac{\omega(\mca{L}, n)}{\val{W}}.$$
This gives
$$\angb{\varepsilon_W}{\sigma_\msc{X}}_W = \sum_{i=1}^k \mfr{m}_i(n) \cdot \angb{\varepsilon_{P_i}}{\mbm{1}_{P_i}} = \mfr{m}_k(n) = \frac{\omega(\mca{L}, n)}{\val{W}}.$$

Now we consider the Steinberg representation ${\rm St}(\wt{G}^{(n)})$. Again by \cite[Theorem 8.8]{GGK},
$$\dim \Wh_\psi({\rm St}(\wt{G}^{(n)})) =\angb{\mbm{1}_W}{ \sigma_\msc{X}}_W = \# \set{\text{all $W$-orbits in $\msc{X}_{Q,n}$}}.$$
For every $w\in W$, let $s(w)$ denote the least number of reflections whose product is $w$ and consider
$$d(w):=\dim (Y\otimes \R)^w,$$
the dimension of the set of fixed points of $w$ in $Y\otimes \R$. One has $d(w)= r -s(w)$. Let $\chi_{\sigma_\msc{X}}$ denote the character of $\sigma_\msc{X}$. It was shown in \cite[Proposition 3.9]{Som97} that
$$\chi_{\sigma_\msc{X}}(w)=n^{d(w)}$$
for every $w\in W$. We also note that
$$\varepsilon_W(w) = (-1)^{s(w)}.$$
Thus,
$$\begin{aligned}
\dim \Wh_\psi({\rm St}(\wt{G}^{(n)})) & =\frac{1}{\val{W}} \sum_{w\in W} n^{d(w)} \\
& = \frac{(-1)^r}{\val{W}} \sum_{w\in W} \varepsilon_W(w)\cdot (-n)^{d(w)} \\
& = \frac{(-1)^r}{\val{W}} \cdot \omega(\mca{L}, -n) \\
& = \val{W}^{-1} \cdot \prod_{j=1}^r (n + m_j).
\end{aligned}$$
This completes the proof.
\end{proof}

The above  recovers those explicit formulas for $\wt{\Sp}_{2r}$ and $\wt{G}_2$ computed in \cite[\S 8.2--8.3]{Ga2}. Applying $n=1$ in \eqref{E:St}, since ${\rm St}(G)$ is generic with unique Whittaker model, we get the classical Weyl order formula $\val{W}=\prod_{j=1}^r(1+m_j)$, see \cite[Corollary 2.3]{Sol66}.

Now assume that $m_{1}\lest m_{2}\lest \ldots\lest m_{r}$.
\begin{cor} \label{C:M-eq}
Let $\wt{G}^{(n)}$ be as in Theorem \ref{T:Whd}. Then $n$ is stable if and only if $n > m_r$, in which case 
$$\mathrm{Hom}_{\HH_{I}}(\mca{V},-):   \mca{M}_R(\HH_I) \longrightarrow \mca{M}_R( \End_{\mca{H}_I}(\mca{V}))$$ gives an equivalence of categories.
\end{cor}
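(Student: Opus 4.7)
The plan is to combine the explicit formula from Theorem \ref{T:Whd} with the equivalent characterizations of stability given in Definition \ref{D:stab}, and then to apply Proposition \ref{P:Meq} to obtain the Morita equivalence.

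By Definition \ref{D:stab}(iii), $n$ is stable if and only if $\dim \Wh_\psi(\Theta(\wt{G}^{(n)})) > 0$, and Theorem \ref{T:Whd} evaluates this dimension as $\val{W}^{-1} \prod_{j=1}^{r} (n - m_{j})$. For the easier implication, if $n > m_{r}$ then every factor $n - m_{j} > 0$ since $m_{j} \lest m_{r}$; hence the product, and so the dimension, is strictly positive and $n$ is stable. This gives one direction essentially for free.

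For the converse, I would assume $n$ is stable, so that $\prod_{j}(n - m_{j}) > 0$. This immediately forces $n \neq m_{j}$ for every $j$, so each factor is nonzero and the overall sign of the product equals $(-1)^{k}$, where $k := \# \{ j : n < m_{j} \}$. Positivity then forces $k$ to be even, and the task reduces to excluding the remaining possibility $k \gest 2$ with $n < m_{r}$. I expect this to be the main obstacle: ruling out the intermediate ``positive ranges'' of the polynomial $\prod_{j}(X - m_{j})$ appears to require the explicit arithmetic constraints on oasitic $n$ recorded in the tables of \cite[\S 6.1]{GGK}. Schematically, one checks type by type that any oasitic $n \lest m_{r}$ either coincides with some exponent $m_{j}$ (so the product vanishes and stability fails) or is simply absent from the list of admissible oasitic parameters for that $G$. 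An alternative, possibly more uniform, route would be to exhibit the positivity of a single factor $n - m_{r}$ directly from the combinatorics of the $W$-action on $\msc{X}_{Q,n}$ via Definition \ref{D:stab}(i), using Sommers' formula to bound the smallest $n$ for which a free orbit can exist.

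Once the equivalence ``stable $\Leftrightarrow n > m_{r}$'' is in place, the Morita conclusion is immediate: whenever $n > m_{r}$, Proposition \ref{P:Meq} asserts that $\mca{V}$ is a progenerator of $\mca{M}_{R}(\HH_{I})$, and the functor $\mathrm{Hom}_{\HH_{I}}(\mca{V}, -)$ realizes the desired equivalence with $\mca{M}_{R}(\End_{\HH_{I}}(\mca{V}))$.
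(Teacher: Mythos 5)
Your proposal is correct and follows essentially the same route as the paper: the forward direction is immediate from the product formula in Theorem \ref{T:Whd}, and for the converse the paper does exactly the type-by-type comparison you anticipate, checking (with $E_8$ worked out as the representative case, others ``similar'') that the divisibility constraints defining oasitic $n$ in the tables of \cite{GGK} force any oasitic $n\lest m_r$ to coincide with one of the exponents, whence $\prod_j(n-m_j)=0$ and stability fails. One small simplification over your sign/parity bookkeeping: since the left side of \eqref{E:Theta} is a dimension and hence nonnegative for every oasitic $n$, the only way stability can fail is for some factor $n-m_j$ to vanish, so the converse reduces directly to the statement that every oasitic $n\lest m_r$ is an exponent, with no need to rule out ``intermediate positive ranges.''
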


\begin{proof}
If $n>m_{r}$, then $n$ is stable by Theorem \ref{T:Whd}. Conversely, the result follows by comparing \cite[Tables 1, 2]{GGK}, with the exponents of the associated Weyl group. We include the details for  type $E_{8}$. The other cases are similar.

By \cite[Table 2]{GGK}, if $\wt{G}$ is an oasitic cover of type $E_{8}$, then $n$ is not divisible by $2,3$, or $5$. The exponents of the $E_{8}$ Weyl group are $1,7,11,13,17,19,23,29$ (e.g. see \cite[Plate VII]{BouL2}). These are exactly the positive integers less than $30$ that are not divisible by $2,3$, or $5$. Since $n$ is stable, it follows that $n>29$, as desired.
\end{proof}

\begin{rmk}
Theorem \ref{T:Whd} is compatible with Example \ref{E:GLsta}, since $m_r=r$ for type $A_r$ groups. For $\wt{\GL}_r^{(n)}$, it is possible to compute $\chi_{\sigma_\msc{X}}$ similarly as in \cite{Som97} and thus obtain a formula for $\dim  \Wh_\psi(\Theta(\wt{\GL}_r, \chi))$ and $\dim \Wh_\psi({\rm St}(\wt{\GL}_r, \chi))$, where $\chi$ is an exceptional character. Alternatively, one can argue as follows in a special case. For this purpose, we consider a Kazhdan--Patterson cover $\wt{\GL}_r^{(n)}$ satisfying $\gcd(n, r) =1$. The covering subgroup $\wt{\SL}_r^{(n)}$
is thus an oasitic cover. In this case, the pair $(\wt{\GL}_r^{(n)}, \wt{\SL}_r^{(n)})$ is an isotypic pair in the sense of \cite[Definition 2.23]{GSS3}. In particular, one has 
\begin{equation} \label{E:iso-p}
\dim \Wh_\psi({\rm St}(\wt{\GL}_r^{(n)}, \chi)) = \val{Y/(Y^{sc} + Y_{Q,n})} \cdot \dim \Wh_\psi({\rm St}(\wt{\SL}_r^{(n)}, \chi_0))
\end{equation}
for the unique exceptional character $\chi_0$ for $\wt{\SL}_r^{(n)}$ obtained from the restriction of $\chi$. In fact, \eqref{E:iso-p} holds for any irreducible constituent (in particular, $\Theta(\wt{\GL}_r, \chi)$ here as well) of a regular unramified principal series of $\wt{\GL}_r$, see \cite[\S 4.3.1]{GSS3}. Combining \eqref{E:iso-p} and \eqref{E:St}, we get
$$\dim \Wh_\psi({\rm St}(\wt{\GL}_r, \chi)) = \frac{n}{\val{Y_{Q,n}/nY}} \cdot \frac{1}{ r!} \cdot \prod_{j=1}^{r-1}(n + j) = \frac{1}{\val{Y_{Q,n}/nY}} \cdot \binom{r+n-1}{r},$$
where $\val{Y_{Q,n}/nY} = \gcd(n, 2r\mbf{p} + r -1)$. We note that this formula can also be obtained from \cite[Theorem 4.7]{Zou22}, which gives the Whittaker dimensions of general square integrable representations of the Kazhdan--Patterson covers.
\end{rmk}

\section{Quantum affine Schur--Weyl duality and $\mca{F}_{\rm{GG}}$} \label{S:SW}
In this section, we consider exclusively a cover $\wt{G}=\wt{\GL}_r$ of type (\hyperref[C1]{C1}). The goal is to show that the Gelfand--Graev functor
$$\mca{F}_{\rm{GG}}: \mca{M}_L(\widehat{\HH}_I) \longrightarrow \mca{M}_L( \End_{\widehat{\mca{H}}_I}(\mca{V}))$$
defined by $\pi\mapsto \mca{V}^{*}\otimes_{\widehat{\HH}_{I}}\pi$ can be identified with the quantum affine Schur--Weyl functor 
 $$\mca{F}_{\rm SW}: \mca{M}_L(\HH(S_r^\aff))  \to \mca{M}_L(\End_{\HH(S_{r}^{\aff})}(\mathscr{V}_{\rm SW}^{[n_\alpha]}))$$
  defined by $\sigma\mapsto \mathscr{V}_{\rm SW}^{[n_\alpha]}\otimes_{\HH(S_{r}^{\aff})} \sigma$. Here 
  $$S_r^\aff = Y \rtimes S_r = \Z^r \rtimes S_r$$
  is the extended affine Weyl group of $\GL_r$ and $\HH(S_r^\aff)$ the Iwahori--Hecke algebra of $\GL_r$. We recall the module structure of $\mathscr{V}_{\rm SW}^{[n_\alpha]}$ in \S \ref{VSWright} and \S \ref{SS:Qact}. Moreover, since $\msc{V}_{\rm SW}^{[n_\alpha]}$ has a left action by $U_q(\hat{\mfr{sl}}(n_\alpha))$ which commutes with the right action by $\HH(S_r^\aff)$, one has a natural algebra homomorphism
  $$\Psi: U_q(\hat{\mfr{sl}}(n_\alpha)) \longrightarrow \End_{\HH(S_r^\aff)}(\msc{V}_{\rm SW}^{[n_\alpha]})$$
  which induces a functor
  $$\Psi^*: \mca{M}_L(\End_{\HH(S_{r}^{\aff})}(\mathscr{V}_{\rm SW}^{[n_\alpha]})) \longrightarrow \mca{M}_L(U_q(\hat{\mfr{sl}}(n_\alpha))).$$
 By abuse of language, we also call 
 \begin{equation} \label{QS-Psi}
 \mca{F}_{\rm SW}^\Psi:= \Psi^* \circ \mca{F}_{\rm SW}: \mca{M}_L(\HH(S_r^\aff))  \to \mca{M}_L(U_q(\hat{\mfr{sl}}(n_\alpha)))
 \end{equation}
 the quantum affine Schur--Weyl functor. In the last part of this section, we obtain a relation between the local scattering matrices and the $R$-matrices, utilizing $\Psi^*$ and the relation between $\mca{F}_{\rm GG}$ and $\mca{F}_{\rm SW}$.

  To compare $\mca{F}_{\rm GG}$ and $\mca{F}_{\rm SW}$ we identify $\widehat{\HH}_{I}$ and $\HH(S_r^\aff)$. Let $\wt{G}^\vee$ be the dual group of $\wt{G}$, and let $G_{Q,n}$ be the $F$-points of a split linear algebraic group whose Langlands dual group is $\wt{G}^\vee$. Let $I_{Q,n} \subset G_{Q,n}$ be the Iwahori subgroup, analogous to $I \subset G$. Depending on the choice of a certain Weyl-invariant distinguished $\iota$-genuine character 
$$\theta: Z(\wt{T}) \to \C^\times,$$
it was shown by Savin \cite{Sav04} (see also \cite{GG}) that one has a natural algebra isomorphism
\begin{equation} \label{phi-t}
\phi_\theta: \HH(G_{Q,n}, I_{Q,n}) \to \widehat{\HH}_I.
\end{equation}
Since $\overline{G}$ is a type (\hyperref[C1]{C1}) cover of $\GL_r$, 
 the group $G_{Q,n}$ has root datum
 $$(n_\alpha^{-1} X, \set{n_\alpha^{-1} \alpha}; n_\alpha Y, \set{n_\alpha \alpha^\vee}).$$
 In particular, $G_{Q,n} \simeq \GL_r$. This gives a natural isomorphism $\HH(G_{Q,n}, I_{Q,n}) \simeq \HH(S_r^\aff)$. By abuse of notation, we still write
   \begin{equation} \label{E:idH}
\phi_\theta: \HH(S_r^\aff) \rightarrow \widehat{\HH}_I
 \end{equation}
 for the resulting isomorphism, which by restriction is the identity map on $\HH_W$. One has the induced functor
 $$\phi_\theta^*: \mca{M}_\Diamond(\widehat{\HH}_I) \longrightarrow \mca{M}_\Diamond(\HH(S_r^\aff))$$
 for $\Diamond \in \set{L, R}$.

\subsection{Right $\HH(S_r^\aff)$-action on $\msc{V}_{\rm SW}^{[m]}$ and $\mca{F}_{\rm SW}$} \label{VSWright}
In this subsection, let $m\in \N_{\gest 1}$. Let $\C[\Z]$ be the $\C$-vector space with a basis given by $\set{v_i: i\in \Z}$. Let
$$\msc{V}_{\rm SW}:= (\C[\Z])^{\otimes r}.$$
For any $m\in \N_{\gest 1}$, the space $\msc{V}_{\rm SW}$ is endowed with  commuting $(U_q(\hat{\mfr{sl}}(m)), \HH(S_r^\aff))$-actions on the left and right hand side of $\msc{V}_{\rm SW}$, see \cite{ChPr96, Ant}, where the two commuting actions both depend on $m$.  We will use 
$$\msc{V}_{\rm SW}^{[m]}$$
to emphasize the dependence of the action on $m$. Now we describe the right action of $\HH(S_r^\aff)$ on $\msc{V}_{\rm SW}^{[m]}$; we describe the left action of $U_q(\hat{\mfr{sl}}(m))$ in \S \ref{SS:Qact}.

There is a canonical map
$$v: Y \into \C[Y], \quad y \mapsto v_y,$$
and similarly
$$\tv: Y/m Y \into \C[Y/m Y] = (\C^m)^{\otimes r}, \quad y \mapsto \tv_y.$$
In terms of the standard $\Z$-basis $\set{e_i: 1\lest i \lest r}$  of $Y$, one has
$$\C[Y] = \C[e_1^{\pm 1}, e_2^{\pm 1}, ..., e_r^{\pm 1}].$$
Clearly,
\begin{equation} \label{E:Rm}
\mfr{R}_m:=\set{\sum_{j=1}^r i_j\cdot e_j: 0\lest i_j \lest m -1}
\end{equation}
is a set of representatives of $Y/mY$. For every integer $i_j \in [0, m-1]$ we write
$$\tv_{i_j}:= \tv_{i_j \cdot e_j}.$$
Thus, a basis of $\C[Y/mY] = (\C^m)^{\otimes r}$ is given by
$$\mfr{B}_m=\set{\tv_{i_1} \otimes \tv_{i_2} \otimes ... \otimes \tv_{i_r}: 0 \lest i_j \lest m -1 \text{ for every $j$}}.$$
One has the $\C$-vector space isomorphism
$$\tau_m:  (\C^m)^{\otimes r} \otimes_\C \C[Y] \longrightarrow \msc{V}_{\rm SW} $$
given by
$$ (\tv_{i_1} \otimes ... \otimes \tv_{i_r}) \otimes (e_1^{k_1}\cdot ... \cdot e_r^{k_r}) \mapsto  v_{i_1 - k_1 m} \otimes ... \otimes v_{i_r - k_r m}.$$

The algebra $\C[W]$ acts on $\C[Y/mY] = (\C^m)^{\otimes r}$ naturally, induced from the permutation action of $W$ on $Y/mY$. This representation can be ``deformed" to get a representation $(\gamma_m, (\C^m)^{\otimes r})$ of $\HH_W$. More precisely, for $\tv_{\underline{i}}:=\tv_{i_1} \otimes ... \otimes  \tv_{i_r} \in (\C^m)^{\otimes r}$, and simple reflection $s_{\alpha_k} = (k, k+1) \in W$, one defines (see \cite[Lemma 2.19]{Ant} or \cite[Lemma 3.1]{SSV21})
\begin{equation}
\tv_{\underline{i}} \cdot T_{s_{\alpha_k}} =
\begin{cases}
\tv_{\underline{i} \cdot s_{\alpha_k}} & \text{ if } i_k < i_{k+1}, \\
q^{-1} \cdot \tv_{\underline{i}} & \text{ if } i_k = i_{k+1}, \\
\tv_{\underline{i} \cdot s_{\alpha_k}} + (q^{-1}- q) \cdot v_{\underline{i}} & \text{ if } i_k > i_{k+1},
\end{cases}
\end{equation}
where $T_{s_{\alpha_k}} \in \HH_W$ is the generator associated with $s_{\alpha_k}$. This gives the desired $\HH_W$-action $\gamma_m$ on $(\C^m)^{\otimes r}$. We note that $(\gamma_m, (\C^m)^{\otimes r})$ can be described in an alternative way as follows. Following notations of \cite{Gre99, DDF12, Ant}, let 
$$\Lambda(m, r)$$
denote the set of all $\lambda:=(\lambda_1, \lambda_2, ..., \lambda_m) \in \Z^{m}_{\gest 0}$ satisfying 
$$\sum_{i=1}\lambda_i = r.$$
For each $\lambda \in \Lambda(m, r)$, consider
$$y_\lambda=(\underbrace{1, 1, ..., 1}_{\lambda_1}, \underbrace{2, 2, ..., 2}_{\lambda_2}, ...., \underbrace{m, m, ..., m}_{\lambda_m}) \in (\Z/m\Z)^r.$$
It is clear that the stabilizer subgroup of $y_\lambda$ in $W=S_r$ is 
$$W_\lambda:=W_{y_\lambda} = S_{\lambda_1} \times ... \times S_{\lambda_m}.$$
Then there is a natural $\C$-linear isomorphism 
\begin{equation} \label{E:gamma}
\bigoplus_{\lambda \in \Lambda(m, r)} \mbm{1} \otimes_{\HH_{W_\lambda}}  \HH_W \longrightarrow (\C^m)^{\otimes r},
\end{equation}
given as in \cite[Page 23, (13)]{Ant} for example, such that $\gamma_m$ is the transported action from the left hand side of this isomorphism.

In view of the Bernstein presentation
$$\HH(S_r^\aff) \simeq \HH_W \otimes_\C  \C[Y],$$
we get that as $\C$-vector spaces,
$$\msc{V}_{\rm SW} \simeq (\C^m)^{\otimes r} \otimes_\C \C[Y] \simeq (\C^m)^{\otimes r} \otimes_{\gamma_m, \HH_W} \HH(S_r^\aff).$$
This gives a right action of $\HH(S_r^\aff)$ on $\msc{V}_{\rm SW}$ depending on $\gamma_m$, which we denote by
$$\msc{V}_{\rm SW}^{[m]} \curvearrowleft \HH(S_r^\aff).$$

\subsection{The Iwahori--Matsumoto involution on $\HH(S_{r}^{\aff})$}
Let $l: Y \rtimes W \to \N$ be a length function.  Consider the Iwahori--Matsumoto involution (\cite[Page 48]{IM65})
$${\rm IM}: \mca{H}(S_r^\aff) \longrightarrow \mca{H}(S_r^\aff)$$
specified by
$${\rm IM}(T_w) = (-1)^{l(w)} q^{l(w)} \cdot T_{w^{-1}}^{-1}$$
for all $w \in Y\rtimes W$. Here, $T_w \in \mca{H}(S_r^\aff)$ is the natural element supported by the $I$-double coset given by $w$. 
We have the induced functor
$${\rm IM}^*: \mca{M}_\Diamond(\mca{H}(S_r^\aff)) \longrightarrow \mca{M}_\Diamond(\mca{H}(S_r^\aff))$$
for $\Diamond \in \set{L, R}$. Let
\begin{equation} \label{V-thet}
\msc{V}_{\rm SW, \theta}^{[m]}:= (\phi_\theta^*)^{-1}(\msc{V}_{\rm SW}^{[m]}) \in \mca{M}_R(\widehat{\HH}_I), \quad \mca{V}_\theta:= \phi_\theta^*(\mca{V}^*) \in \mca{M}_R(\HH(S_r^\aff)).
\end{equation}

\begin{prop} \label{P:IM}
Let $\wt{G}=\wt{\GL}_r^{(n)}$ be a cover of $\GL_r$ of type (\hyperref[C1]{C1}). Then $(\phi_\theta^*)^{-1}$ gives a natural isomorphism
\begin{equation} \label{E:rel}
\bold{F}_{\theta}:\mca{V}^{*} \longrightarrow (\phi^{-1}_{\theta})^{*}\circ {\rm IM}^*(\msc{V}_{\rm SW}^{[n_\alpha]})
\end{equation}
of right $\widehat{\HH}_I$-modules and thus a ring isomorphism $\varphi_{\theta}:\End_{\HH(S_{r}^{\aff})}(\msc{V}_{\rm SW}^{[n_\alpha]})\rightarrow \End_{\widehat{\HH}_I}(\mca{V}^{*})$. Moreover, for every $\pi \in \mca{M}_L(\widehat{\HH}_I)$, the isomorphism \eqref{E:rel} induces a canonical $\C$-isomorphism
\begin{equation}\label{GGSWTensorIso}
\bold{F}_{\theta}\otimes {\rm id}_{\pi}:\mca{V}^{*} \otimes_{\widehat{\HH}_I} \pi \stackrel{}{\longrightarrow} \msc{V}_{\rm SW}^{[n_\alpha]} \otimes_{\HH(S_{r}^{\aff})} \rm{IM}^{*}\circ \phi_{\theta}^{*}(\pi),
\end{equation}
which is $\varphi_{\theta}^{-1}$-equivariant with respect to the left actions of $\End_{\widehat{\HH}_I}(\mca{V}^{*})$ and $\End_{\HH(S_{r}^{\aff})}(\msc{V}_{\rm SW}^{[n_\alpha]})$ respectively.
\end{prop}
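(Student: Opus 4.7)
The plan is to prove \eqref{E:rel} by matching the natural direct-sum decompositions of both sides summand-by-summand. By Lemma \ref{L:V-mirr}(iii),
$\mca{V}^{*} \simeq \bigoplus_{\mca{O}_y \subset \msc{X}_{Q,n}} \varepsilon_{W_y} \otimes_{\widehat{\HH}_{W_y}} \widehat{\HH}_I$. For type (\hyperref[C1]{C1}) covers one has $Y_{Q,n} = n_\alpha Y$, so $\msc{X}_{Q,n} \simeq (\Z/n_\alpha\Z)^r$ with $W = S_r$ acting by coordinate permutation; the $W$-orbits are therefore parameterized exactly by $\lambda \in \Lambda(n_\alpha, r)$ via the representatives $y_\lambda$ from \S \ref{VSWright}, with stabilizers $W_{y_\lambda} = W_\lambda$. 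On the other side, combining \eqref{E:gamma} with the Bernstein presentation of $\HH(S_r^\aff)$ yields
$$\msc{V}_{\rm SW}^{[n_\alpha]} \simeq \bigoplus_{\lambda \in \Lambda(n_\alpha, r)} \mbm{1} \otimes_{\HH_{W_\lambda}} \HH(S_r^\aff)$$
as right $\HH(S_r^\aff)$-modules. The two indexing sets are in canonical bijection, so the construction of $\bold{F}_\theta$ reduces to the case of a single orbit.

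For a fixed $\lambda$, the plan rests on two compatibilities. First, ${\rm IM}$ preserves each parabolic subalgebra $\HH_{W_\lambda}$, because the length function on $Y \rtimes W$ restricts to the Coxeter length on $W_\lambda$; and on $\HH_{W_\lambda}$ it interchanges the trivial and sign characters --- a direct check on generators using ${\rm IM}(T_s) = -q\,T_s^{-1}$ together with the values $\mbm{1}(T_s) = q$ and $\varepsilon_{W_\lambda}(T_s) = -1$. By the standard compatibility of pullback along an algebra automorphism with induction,
$${\rm IM}^{*}\bigl(\mbm{1} \otimes_{\HH_{W_\lambda}} \HH(S_r^\aff)\bigr) \simeq \varepsilon_{W_\lambda} \otimes_{\HH_{W_\lambda}} \HH(S_r^\aff).$$
Second, $\phi_\theta$ is the identity on $\HH_W$ and hence on each $\HH_{W_\lambda}$, so applying $(\phi_\theta^{-1})^{*}$ transports the right-hand side to $\varepsilon_{W_\lambda} \otimes_{\widehat{\HH}_{W_\lambda}} \widehat{\HH}_I$, matching the $\mca{V}^{*}$-summand for the orbit $\mca{O}_{y_\lambda}$. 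Summing over $\lambda$ produces $\bold{F}_\theta$.

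The remaining assertions I would deduce formally. The ring isomorphism $\varphi_\theta$ is defined by conjugation, $a \mapsto \bold{F}_\theta^{-1} \circ a \circ \bold{F}_\theta$, via the functoriality of $(\phi_\theta^{-1})^{*}\circ{\rm IM}^{*}$. For \eqref{GGSWTensorIso}, I would tensor $\bold{F}_\theta$ on the right with $\pi$ over $\widehat{\HH}_I$ and then invoke two standard base-change isomorphisms: $(\phi_\theta^{-1})^{*}(N) \otimes_{\widehat{\HH}_I} \pi \simeq N \otimes_{\HH(S_r^\aff)} \phi_\theta^{*}(\pi)$ for the algebra isomorphism $\phi_\theta$, and ${\rm IM}^{*}(M) \otimes_{\HH(S_r^\aff)} \pi' \simeq M \otimes_{\HH(S_r^\aff)} {\rm IM}^{*}(\pi')$ for the involution ${\rm IM}$. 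Composing these yields the stated isomorphism, and the $\varphi_\theta^{-1}$-equivariance of $\bold{F}_\theta \otimes {\rm id}_\pi$ is then immediate from the defining diagram of $\varphi_\theta$ together with naturality.

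The main obstacle I anticipate is essentially bookkeeping: keeping straight the side conventions (left versus right) under the hat anti-involution relating $\HH_I$ and $\widehat{\HH}_I$, the sign conventions in ${\rm IM}$, and the precise sense in which $\phi_\theta$ is the ``identity'' on the parabolic Hecke subalgebras inside two a priori different ambient algebras. Once these conventions are pinned down, the proof is the summand-by-summand matching sketched above followed by formal base change.
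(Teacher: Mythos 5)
Your proposal is correct and follows essentially the same path as the paper: decompose both sides over the $W$-orbits of $\msc{X}_{Q,n}$ (using Lemma \ref{L:V-mirr}(iii) and \eqref{E:gamma} together with the Bernstein presentation), match the indexing sets via $\Lambda(n_\alpha,r) \ni \lambda \mapsto W(y_\lambda)$, observe that ${\rm IM}$ preserves each $\HH_{W_\lambda}$ and exchanges $\mbm{1}\leftrightarrow\varepsilon$, commute ${\rm IM}^*$ past induction, transport along $\phi_\theta$, and finish by formal base change. The one place the paper elaborates and you invoke as ``standard'' is the compatibility of ${\rm IM}^*$ with induction from $\HH_{W_\lambda}$: the paper proves this as a short lemma (defining an explicit map $h\otimes 1\mapsto {\rm IM}(h)\otimes 1$ and its inverse), so if you want to match the level of detail you should spell that out; otherwise the content is the same.
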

\begin{proof}
We first prove \eqref{E:rel}. Since $\msc{X}_{Q,n} = Y/n_\alpha Y$, it is easy to see that the following map is a well-defined bijection.
$$\Lambda(n_\alpha, r) \longrightarrow \set{W\text{-orbits in } \msc{X}_{Q,n}}, \quad \lambda \mapsto W(y_\lambda)$$
This immediately gives that
\begin{equation}\label{GGSWEqn2}
\msc{V}_{\rm SW}^{[n_\alpha]} = \bigoplus_{\mathcal{O}_{y}\subset \msc{X}_{Q,n}} \mbm{1}_{W_{y}} \otimes_{\HH_{W_{y}}} \HH(S_{r}^{\aff}).
\end{equation}

To prove \eqref{E:rel} we use the following lemma.

\begin{lm}\label{IMCharSwap}
Let $\chi$ be a character of the parabolic subalgebra $\mathcal{H}_{W_{y}}$. Then the $\mathcal{H}_{W_{y}}$-module homomorphism 
\begin{equation*}
{\rm IM}^{*}(\chi)\longrightarrow {\rm IM}^*(\HH(S_{r}^{\rm aff})\otimes_{\HH_{W_{y}}}\chi)
\end{equation*}
defined by $1 \mapsto 1\otimes 1$ extends to an isomorphism of left $\HH(S_{r}^{\aff})$-modules 
$$\nu:\HH(S_{r}^{\aff})\otimes_{\mathcal{H}_{W_y}}{\rm IM}^{*}(\chi)\longrightarrow {\rm IM}^*(\HH(S_{r}^{\aff})\otimes_{\mathcal{H}_{W_y}}\chi).$$
\end{lm}

\begin{proof}
The linear map ${\rm IM}^{*}(\chi)\rightarrow {\rm IM}^*(\HH(S_{r}^{\aff})\otimes_{\mathcal{H}_{W_{y}}}\chi)$ defined by $1\mapsto 1\otimes 1$ is an $\mathcal{H}_{W_{y}}$-module homomorphism.

By Frobenius reciprocity, this $\mathcal{H}_{W_y}$-module map induces an $\HH(S_{r}^{\aff})$-module homomorphism 
$$\nu:\HH(S_{r}^{\aff})\otimes_{\mathcal{H}_{W_y}}{\rm IM}^{*}(\chi)\longrightarrow {\rm IM}^*(\HH(S_{r}^{\aff})\otimes_{\mathcal{H}_{W_y}}\chi).$$ Note that $\nu(h\otimes 1)={\rm IM}(h)\otimes 1$. We claim that $\nu$ is an isomorphism.

It suffices to construct an inverse map of $\nu$. We consider the linear map 
$$\nu^{\prime}:{\rm IM}^*(\HH(S_{r}^{\aff})\otimes_{\mathcal{H}_{W_y}}\chi)\rightarrow \HH(S_{r}^{\aff})\otimes_{\mathcal{H}_{W_y}}{\rm IM}^{*}(\chi)$$
 defined by $h\otimes 1\mapsto {\rm IM}(h)\otimes 1$. This is well-defined because 
 $$\nu^{\prime}( hh_{y}\otimes 1)= {\rm IM}(h)\otimes \chi(h_{y})=\nu^{\prime}(h\otimes \chi(h_{y})).$$
 Note that as  vector spaces ${\rm IM}^*(\HH(S_r^\aff)\otimes_{\HH_{W_y}}\chi)=\HH(S_r^\aff)\otimes_{\HH_{W_y}}\chi$. Thus we have constructed a $\C$-linear map $\nu^{\prime}: {\rm IM}^*(\HH(S_r^\aff)\otimes_{\HH_{W_y}}\chi)\rightarrow \HH(S_{r}^{\aff})\otimes_{\HH_{W_y}}{\rm IM}^{*}(\chi)$ such that $h\otimes 1\mapsto {\rm IM}(h)\otimes 1$. 

Since $\nu\circ\nu^{-1}(h\otimes 1)=h\otimes1$ and $\nu^{-1}\circ\nu(h\otimes1)=h\otimes 1$, the $\C$-linear map $\nu^{\prime}$ is the inverse of $\nu$. Furthermore, since $\nu$ is an $\HH(S_{r}^{\aff})$-module homomorphism, we see $\nu^{\prime}$ is also an $\HH(S_{r}^{\aff})$-homomorphism. Thus $\nu$ is an $\HH(S_{r}^{\aff})$-module isomorphism.
\end{proof}

Now we can prove \eqref{E:rel}. First, if $\mbm{1}$ is the trivial character of a standard parabolic subgroup $\HH_{W_y} \subseteq \mathcal{H}_{W}$, then ${\rm IM}^{*}(\mbm{1})= \varepsilon$, the sign character of $\HH_{W_y}$. Second, for any two ${\rm IM}^*(\HH(S_{r}^{\aff}))$-modules $V_{1}$ and $V_{2}$ we have 
$$\rm{IM}^*(V_{1}\oplus V_{2})\simeq \rm{IM}^*(V_{1})\oplus \rm{IM}^*(V_{2}).$$
Using Lemma \ref{L:V-mirr}, Lemma \ref{IMCharSwap} and these two facts it follows that there is an isomorphism $\bold{F}_{\theta}:\mca{V}^{*} \longrightarrow (\phi^{-1}_{\theta})^{*}\circ {\rm IM}^*(\msc{V}_{\rm SW}^{[n_\alpha]})$. This completes the proof of equation \eqref{E:rel}. The remaining claims follow directly from it.
\end{proof}

\subsection{Comparison of $\mca{F}_{\rm GG}$ and $\mca{F}_{\rm SW}$} \label{SS:comp}

Recall the isomorphism $\phi_\theta: \HH(S_r^\aff)\to \widehat{\HH}_I$ which, coupled with \eqref{E:rel}, gives rise to a natural algebra isomorphism
$\varphi_\theta: \End_{\HH(S_r^\aff)}(\msc{V}_{\rm SW}^{[n_\alpha]})\to \End_{\widehat{\HH}_I}(\mca{V}^*)$. It induces a functor
$$\varphi_\theta^*: \mca{M}_L(\End_{\widehat{\HH}_I}(\mca{V}^*)) \longrightarrow \mca{M}_L(\End_{\HH(S_r^\aff)}(\msc{V}_{\rm SW}^{[n_\alpha]})).$$
Now we can compare $\mca{F}_{\rm GG}$ and $\mca{F}_{\rm SW}$ as follows.

\begin{thm} \label{T:SW=Wh}
Let $\wt{G}=\wt{\GL}_r^{(n)}$ be a cover of $\GL_r$ of type (\hyperref[C1]{C1}).  Then for every $\pi\in \mca{M}_L(\widehat{\HH}_I)$, one has
\begin{equation} \label{E:SW=Wh}
\mca{F}_{\rm SW} \circ {\rm IM}^* \circ \phi_\theta^*(\pi) \simeq \varphi_\theta^* \circ \mca{F}_{\rm GG}(\pi),
\end{equation}
i.e., the diagram
$$\begin{tikzcd}
\mca{M}_L(\widehat{\HH}_I) \ar[d, "{\rm{IM}^{*}\circ\phi_\theta^*}"]  \ar[r, "\mathcal{F}_{\rm{GG}}"] & \mca{M}_L(\End_{\widehat{\mca{H}}_I}(\mca{V}^*)) \ar[d, "{\varphi_\theta^*}"] \\
\mca{M}_L(\HH(S_r^\aff)) \ar[r, "{\mca{F}_{\rm SW}}"] &  \mca{M}_L(\End_{\HH(S_r^\aff)}(\msc{V}_{\rm SW}^{[n_\alpha]}))
\end{tikzcd}$$
of functors commutes.
\end{thm}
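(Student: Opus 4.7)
The plan is to derive the theorem essentially as a formal consequence of Proposition \ref{P:IM}; the substantive work has already been carried out there, and what remains is to repackage the statement as a commutative diagram of functors.

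First I would unfold the two sides of \eqref{E:SW=Wh}. By definition,
$$\mca{F}_{\rm SW} \circ {\rm IM}^* \circ \phi_\theta^*(\pi) = \msc{V}_{\rm SW}^{[n_\alpha]} \otimes_{\HH(S_r^\aff)} {\rm IM}^* \circ \phi_\theta^*(\pi),$$
equipped with the natural left action of $\End_{\HH(S_r^\aff)}(\msc{V}_{\rm SW}^{[n_\alpha]})$ on the first tensor factor, while
$$\varphi_\theta^* \circ \mca{F}_{\rm GG}(\pi) = \mca{V}^* \otimes_{\widehat{\HH}_I} \pi$$
as a $\C$-vector space, but with the $\End_{\HH(S_r^\aff)}(\msc{V}_{\rm SW}^{[n_\alpha]})$-action obtained from the natural $\End_{\widehat{\HH}_I}(\mca{V}^*)$-action by pulling back along $\varphi_\theta$.

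Second, I would simply invoke the isomorphism \eqref{GGSWTensorIso} from Proposition \ref{P:IM}, which produces a canonical $\C$-linear isomorphism
$$\bold{F}_\theta \otimes {\rm id}_\pi: \mca{V}^* \otimes_{\widehat{\HH}_I} \pi \longrightarrow \msc{V}_{\rm SW}^{[n_\alpha]} \otimes_{\HH(S_r^\aff)} {\rm IM}^* \circ \phi_\theta^*(\pi)$$
that is $\varphi_\theta^{-1}$-equivariant with respect to the two left endomorphism-algebra actions. This equivariance is precisely what is required: restricting the target's action along $\varphi_\theta^*$ recovers the source, which is \eqref{E:SW=Wh}. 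For the diagram to commute as a diagram of functors (and not merely pointwise), I would verify naturality in $\pi$. Since $\bold{F}_\theta$ is fixed (independent of $\pi$) and both functors in the diagram are given by tensoring with a fixed bimodule followed by restriction of scalars along a fixed algebra map, any morphism $\pi \to \pi'$ in $\mca{M}_L(\widehat{\HH}_I)$ induces compatible morphisms on the two tensor products, so that the family $\{\bold{F}_\theta \otimes {\rm id}_\pi\}_\pi$ is a natural isomorphism of functors.

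The main obstacle is really contained inside Proposition \ref{P:IM}, specifically in Lemma \ref{IMCharSwap} and in the combinatorial match \eqref{GGSWEqn2} between $W$-orbits in $\msc{X}_{Q,n}$ and elements of $\Lambda(n_\alpha, r)$. The only subtlety one must keep track of is the trivial-versus-sign-character swap produced by ${\rm IM}^*$, which aligns the trivial character $\mbm{1}_{W_y}$ occurring in the decomposition \eqref{GGSWEqn2} of $\msc{V}_{\rm SW}^{[n_\alpha]}$ with the sign character $\varepsilon_{W_y}$ occurring in the decomposition of $\mca{V}^*$ from Lemma \ref{L:V-mirr}(iii). Once this alignment and the equivariance of $\bold{F}_\theta \otimes {\rm id}_\pi$ have been quoted from Proposition \ref{P:IM}, the remainder of the argument is formal.
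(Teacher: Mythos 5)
Your proposal is correct and follows essentially the same route as the paper's proof: both reduce the theorem to Proposition \ref{P:IM}, with you quoting the $\varphi_\theta^{-1}$-equivariant isomorphism \eqref{GGSWTensorIso} directly, while the paper unwinds it via tensor-product manipulations from \eqref{E:rel}. Your added remark on naturality in $\pi$ (needed to promote the pointwise isomorphisms to a commuting diagram of functors) is sound and left implicit in the paper.
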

\begin{proof} Using \eqref{E:rel}, the left hand side of \eqref{E:SW=Wh} is isomorphic to
$$\msc{V}_{\rm SW}^{[n_\alpha]} \otimes_{\HH(S_r^\aff)} ({\rm IM}^* \circ \phi_\theta^*(\pi) ) \simeq {\rm IM}^*(\msc{V}_{\rm SW}^{[n_\alpha]}) \otimes_{\HH(S_r^\aff)} \phi_\theta^*(\pi) \simeq  \phi_\theta^*(\mca{V}^*) \otimes_{\HH(S_r^\aff)} \phi_\theta^*(\pi).$$
Since $\mca{F}_{\rm GG}(\pi) = \mca{V}^{*} \otimes_{\widehat{\HH}_I} \pi$
and thus 
$$\varphi_\theta^* \circ \mca{F}_{\rm GG}(\pi) \simeq \phi_\theta^*(\mca{V}^{*}) \otimes_{\HH(S_r^\aff)} \phi_\theta^*(\pi),$$
\eqref{E:SW=Wh} follows.
\end{proof}

The next corollary follows immediately from Theorem \ref{T:SW=Wh}. 

\begin{cor}
Let $\pi$ be a left $\wt{\GL}_r^{(n)}$-representation that is irreducible $\iota^{-1}$-genuine and Iwahori-spherical. Then
$$\dim \Wh_{\psi^{-1}} (\pi) = \dim \Hom_{\widehat{\HH}_I}(\mca{V}, (\pi^{\vee})^I) =\dim \mca{F}_{\rm SW}({\rm IM}^* \circ \phi_\theta^*((\pi^{\vee})^I)),$$
where $\mca{F}_{\rm SW}$ is associated with $\msc{V}_{\rm SW}^{[n_\alpha]}$. Moreover, if $n_\alpha > r$, then $\mca{F}_{\rm SW}$ is a faithful functor (see \cite[Theorem 2.60]{Ant}) and this gives that $\dim \Wh_\psi(\pi)>0$ for every $\pi$ in this case. 
\end{cor}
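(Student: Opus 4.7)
My plan is to chain three identifications: (a) Borel--Casselman-type adjunction converting $\wt{G}$-equivariant homomorphisms into $\widehat{\HH}_I$-equivariant ones on Iwahori-fixed parts, (b) the isomorphism $\Hom_{\widehat{\HH}_I}(\mca{V},-)\simeq \mca{V}^{*}\otimes_{\widehat{\HH}_I}-$ coming from the finite generation and projectivity of $\mca{V}$ (as established after Proposition \ref{P:Meq}), and (c) the commuting square of functors in Theorem \ref{T:SW=Wh}. The final assertion on positivity is then a formal consequence of the faithfulness of $\mca{F}_{\rm SW}$ cited from [Ant, Thm.~2.60].

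For the first equality, I would unwind the definition to write
\[
\Wh_{\psi^{-1}}(\pi)=\Hom_{\wt{G}}\bigl({\rm ind}_{\mu_n\times U^-}^{\wt{G}}\iota\times\psi,\ \pi^{\vee}\bigr).
\]
The compactly induced genuine Gelfand--Graev representation is generated (as a smooth $\wt{G}$-module) by its Iwahori-fixed vectors via Iwahori factorization, so the Borel--Casselman right adjoint of $(-)^{I}$ identifies the above $\Hom$-space with $\Hom_{\widehat{\HH}_{I}}(\mca{V},(\pi^{\vee})^{I})$, where we pass from $\HH_{I}$ to $\widehat{\HH}_{I}$ through the anti-involution $\,\widehat{\phantom{X}}\,$ as explained before Lemma \ref{L:V-mirr}. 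This gives the first equality.

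For the second equality, use that $\mca{V}$ is a finitely generated projective left $\widehat{\HH}_{I}$-module (noted right after Proposition \ref{P:Meq} in the paragraph preceding the definition of $\mca{F}_{\rm GG}$) to obtain
\[
\Hom_{\widehat{\HH}_{I}}(\mca{V},(\pi^{\vee})^{I})\ \simeq\ \mca{V}^{*}\otimes_{\widehat{\HH}_{I}}(\pi^{\vee})^{I}\ =\ \mca{F}_{\rm GG}((\pi^{\vee})^{I}).
\]
Now apply Theorem \ref{T:SW=Wh} with the input module $(\pi^{\vee})^{I}\in \mca{M}_{L}(\widehat{\HH}_{I})$ to get the isomorphism $\varphi_{\theta}^{*}\circ \mca{F}_{\rm GG}((\pi^{\vee})^{I})\simeq \mca{F}_{\rm SW}\circ {\rm IM}^{*}\circ \phi_{\theta}^{*}((\pi^{\vee})^{I})$. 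Since $\varphi_{\theta}^{*}$ is merely restriction of scalars along the algebra isomorphism $\varphi_{\theta}$, the underlying $\C$-dimensions agree, giving the second equality.

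For the final assertion, since $\pi$ is irreducible and Iwahori-spherical, $(\pi^{\vee})^{I}\neq 0$. Both ${\rm IM}^{*}$ and $\phi_{\theta}^{*}$ are equivalences of categories, so ${\rm IM}^{*}\circ \phi_{\theta}^{*}((\pi^{\vee})^{I})\neq 0$. Under the hypothesis $n_\alpha>r$, the quantum affine Schur--Weyl functor $\mca{F}_{\rm SW}$ is faithful by [Ant, Theorem 2.60]; applied to the identity morphism of a nonzero module, faithfulness forces $\mca{F}_{\rm SW}({\rm IM}^{*}\circ \phi_{\theta}^{*}((\pi^{\vee})^{I}))\neq 0$, hence $\dim \Wh_{\psi^{-1}}(\pi)>0$. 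Finally, the same chain of arguments, with $\psi$ replaced by $\psi^{-1}$ throughout (and $\mca{V}$ replaced by the analogous module $(\text{ind}_{\mu_n\times U^{-}}^{\wt{G}}\iota\times \psi^{-1})^{I}$), gives $\dim \Wh_{\psi}(\pi)>0$. The main potential obstacle is step (a): verifying that the Borel--Casselman adjunction genuinely applies to the (non-admissible, non-finitely-generated) Gelfand--Graev representation. This is standard but should be stated explicitly, since the usual Borel--Casselman theorem quoted in the introduction addresses only irreducible Iwahori-spherical representations.
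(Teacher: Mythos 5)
Your proof is correct and fills in, in the expected way, the paper's one-line claim that the corollary ``follows immediately from Theorem~\ref{T:SW=Wh}.'' The three ingredients you supply are exactly what that ``immediately'' suppresses: the Bernstein-block/Borel--Casselman reduction of $\Hom_{\wt{G}}$ to $\Hom_{\widehat{\HH}_I}$, the $\Hom$-to-tensor rewriting via projectivity and finite generation of $\mca{V}$, and then Theorem~\ref{T:SW=Wh} applied to $(\pi^\vee)^I$; the positivity claim is also handled correctly, as a faithful additive functor reflects zero objects.

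One small observation on your last paragraph: the paper's final sentence shifts from $\Wh_{\psi^{-1}}$ to $\Wh_\psi$, and you chose to rerun the chain with $\psi$ replaced by $\psi^{-1}$. That rerun implicitly uses that the Iwahori-component of $(\mathrm{ind}_{\mu_n\times U^-}^{\wt{G}}\iota\times\psi^{-1})$ has the same $\widehat{\HH}_I$-module structure as $\mca{V}$ (true, since the description in \cite[Theorem 5.18]{GGK} depends only on the conductor of the character, not on $\psi$ versus $\psi^{-1}$), and also that the analogue of Theorem~\ref{T:SW=Wh} holds for this twin module; you should say so explicitly. More likely the paper's discrepancy is simply notational and both occurrences were intended to be $\Wh_{\psi^{-1}}$, in which case no extra work is needed. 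Either way this is a minor point and does not affect the substance of the argument.
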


An analogue of the above equality and the faithfulness for $n_\alpha > r$ were first observed by V. Buciumas. In fact, he already noticed that one can relate Whittaker models of such $\pi$ to quantum affine representations by using the work of Chari--Pressley \cite{ChPr96}.

\subsection{Left $U_q(\hat{\mfr{sl}}(m))$-action on $\msc{V}_{\rm SW}^{[m]}$} \label{SS:Qact}
Let $m\in \N_{\gest 1}$ be general. We recall the quantum affine group $U_q(\hat{\mfr{sl}}(m))$ and its left action on $\msc{V}_{\rm SW}^{[m]}$. First, let $\alpha^\dag$ be the highest root of the root system of type $A$. Write
$$\alpha_0 = - \alpha^\dag.$$
Let $$\hat{\mca{C}}_A:=\left[\angb{\alpha_i}{\alpha_j^\vee}\right]_{0\lest i, j \lest m-1}$$
be the generalized Cartan matrix of type $A_r$. For convenience, we write
$$a_{i, j}=\angb{\alpha_i}{\alpha_j^\vee}.$$
Now, $U_q(\hat{\mfr{sl}}(m))$ is the quantized affine Lie algebra associated with $\hat{\mca{C}}_A$. More precisely, it is the $\C$-algebra with generators
$$\set{E_i, F_i, K_i^{\pm1}: 0 \lest i \lest m-1}$$
and relations (see \cite[\S 2.4]{ChPr96} and \cite[\S 2.5]{Ant})
\begin{enumerate}
\item[(R1)] $K_i K_i^{-1} = 1 = K_i^{-1} K_i$ and $K_i K_j = K_j K_i$.
\item[(R2)]  $K_i E_j = q^{a_{i,j}} \cdot E_j K_i$ and $K_i F_j = q^{-a_{i,j}} \cdot F_j K_i$.
\item[(R3)]  $[E_i, F_j] =\delta_{i, j}\cdot (K_i - K_i^{-1})/(q-q^{-1})$.
\item[(R4)] 
$$\begin{aligned}
E_i^2 E_{i\pm 1} + E_{i\pm 1} E_i^2 & = (q+q^{-1}) E_i E_{i\pm 1} E_i \\
F_i^2 F_{i\pm 1} + F_{i\pm 1} F_i^2 & = (q+q^{-1}) F_i F_{i\pm 1} E_i \\
E_i E_j  & = E_j E_i \text{ if } i-j \ne 0, 1, m-1\\
F_i F_j & = F_j F_i \text{ if } i-j \ne 0, 1, m-1.
\end{aligned}$$
The computation of $i\pm 1$ and $i-j$ above are all valued in $\set{0, 1, ..., m-1}$, the fixed set of representatives of $\Z/m\Z$.
\end{enumerate}

In terms of the basis $\set{v_i: i\in \Z}$ of $\C[\Z]$, one has an action of $U_q(\hat{\mfr{sl}}(m))$ on $\C[\Z]$ given by the following:
$$\begin{aligned}
E_i\cdot v_j & = \delta_{i+1, j} \cdot v_{j-1} \\
F_i \cdot v_j & = \delta_{i, j} \cdot v_{j+1} \\
K_i \cdot v_j & = q^{\delta_{i, j} - \delta_{i+1, j}} \cdot v_j.
\end{aligned}$$
Again, the computation of $i+1$ and $j\pm 1$ in the subscripts of the $\delta$-functions above are valued in the set $\set{0, 1, ..., m-1}$ of representatives for $\Z/m\Z$. 

To have an action of $U_q(\hat{\mfr{sl}}(m))$ on $\msc{V}_{\rm SW}^{[m]}=(\C[\Z])^{\otimes r}$, we note that the quantum group $U_q(\hat{\mfr{sl}}(m))$ has a comultiplication map 
$$\begin{tikzcd}
\Delta: U_q(\hat{\mfr{sl}}(m)) \ar[r] & U_q(\hat{\mfr{sl}}(m))\otimes_\C U_q(\hat{\mfr{sl}}(m))
\end{tikzcd}$$
given by 
$$E_i \mapsto E_i \otimes K_i^{-1} + 1\otimes E_i, \quad F_i \mapsto F_i\otimes 1 + K_i \otimes F_i, \text{ and } K_i \mapsto K_i \otimes K_i.$$
This gives a well-defined representation 
$$\Delta^{r-1}: U_q(\hat{\mfr{sl}}(m))  \longrightarrow \End_\C( (\C[\Z])^{\otimes r} ),$$
equivalently, a left action $U_q(\hat{\mfr{sl}}(m)) \curvearrowright \msc{V}_{\rm SW}^{[m]}$.

It is known (see \cite[Corollary 2.55]{Ant}) that the above left action by $U_q(\hat{\mfr{sl}}(m))$ commutes with the right $\HH(S_r^\aff)$-action given in \S \ref{VSWright} , which we write as
\begin{equation} \label{D:ca}
U_q(\hat{\mfr{sl}}(m)) \curvearrowright \msc{V}_{\rm SW}^{[m]} \curvearrowleft \HH(S_r^\aff).
\end{equation}
From \eqref{D:ca} one has an algebra homomorphism
$$\Psi: U_q(\hat{\mfr{sl}}(m)) \longrightarrow \End_{\HH(S_r^\aff)}(\msc{V}_{\rm SW}^{[m]}),$$
which is surjective if $m > r$, see \cite[Theorem 3.2.1]{DoGr07} or \cite[Theorem 3.8.3]{DDF12}. Here $\Psi$ induces
$$\Psi^*: \mca{M}_L({\rm End}_{\HH(S_r^\aff)}(\msc{V}_{\rm SW}^{[m]})) \longrightarrow \mca{M}_L(U_q(\hat{\mfr{sl}}(m))).$$

\begin{dfn} \label{D:eva}
For every $s\in \C$, the subspace
$$S_{m, s}:={\rm Span}_\C \set{v_i - q^s v_{i+m}: i\in \Z} \subset \C[\Z]$$
is $U_q(\hat{\mfr{sl}}(m))$-stable, and we call $V(s):=\C[\Z]/S_{m, s}$ the shifted evaluation representation of $U_q(\hat{\mfr{sl}}(m))$ associated with $s$.
\end{dfn}
Here $V(s)$ is exactly the $U_q(\hat{\mfr{sl}}(m))$-module $V(q^s)$ in \cite[\S 2.4]{ChPr96}, for $a=q^s$ there. Recall the standard evaluation representation $V_{\rm ev}(b):= (V_q^\natural)_{q^b}, b \in \C$, where the latter is in the notation of \cite[Example 12.3.17]{ChPr95}. Then it follows from loc. cit. that $V(s) = V_{\rm ev}(2/(m+1) + s)$, whence called the shifted evaluation representation.

\begin{rmk} \label{R:GL}
We may consider $U_q(\hat{\mfr{gl}}(m))$ instead of $U_q(\hat{\mfr{sl}}(m))$. In this case, one can define the natural left $U_q(\hat{\mfr{gl}}(m))$ action on $\msc{V}_{\rm SW}^{[m]}$ which still commutes with the right $\HH(S_r^\aff)$-action, depending on $m$. It gives rise to the composite of functors
$$\begin{tikzcd}
\mca{M}_L(\HH(S_r^\aff)) \ar[r, "{\mca{F}_{\rm SW}'}"]  & \mca{M}_L(\End_{\HH(S_r^\aff)}(\msc{V}_{\rm SW}^{[m]})) \ar[r, "{(\Psi')^*}"] &  \mca{M}_L(U_q(\hat{\mfr{gl}}(m))),
\end{tikzcd}$$
which is faithful if $m \gest r$.
This faithfulness can be obtained as follows. First, $\Psi': U_q(\hat{\mfr{gl}}(m)) \onto \End_{\HH_I}(\msc{V}_{\rm SW}^{[m]})$ is surjective if $m \gest 2$, see \cite[Theorem 3.8.1]{DDF12}. On the other hand, it is clear that there exists $\wt{\GL}_r^{(n)}$ of type (\hyperref[C1]{C1}) such that $n_\alpha =m$, for example one can take any $(\mbf{p}, \mbf{q}, n) = (\mbf{p}, 2\mbf{p}m, 2\mbf{p}m)$ with $\mbf{p}\ne 0$. Thus,  if $m \gest r$, 
then it follows from Proposition \ref{P:Meq} and Example \ref{E:GLsta} that $\mca{F}_{\rm Wh}$ gives an equivalence of categories. Since the analogue of \eqref{E:SW=Wh} still holds, we see that $\mca{F}_{\rm SW}'$ is faithful for $n_\alpha \gest r$, and so is $(\Psi')^* \circ \mca{F}_{\rm SW}'$.
\end{rmk}

\subsection{Local scattering matrices and R-matrices} \label{SS:lsm-rm}
By using $\Psi^*$ and the relation between $\mca{F}_{\rm GG}$ and $\mca{F}_{\rm SW}$ in Theorem \ref{T:SW=Wh}, we relate the local scattering matrices with $R$-matrices.

We consider exclusively $m=n_\alpha$ for the remainder of this paper. We write
$$\mca{F}_{\rm GG, \theta}^\Psi:=\Psi^* \circ \varphi_\theta^* \circ \mca{F}_{\rm GG}: \mca{M}_L(\widehat{\HH}_I) \longrightarrow \mca{M}_L(U_q(\hat{\mfr{sl}}(n_\alpha)))$$
and
$$\mca{F}_{\rm SW, \theta}^\Psi:=\mca{F}_{\rm SW}^\Psi \circ \phi_\theta^*: \mca{M}_L(\widehat{\HH}_I) \longrightarrow \mca{M}_L(U_q(\hat{\mfr{sl}}(n_\alpha)))$$
to simplify notation, where $\mca{F}_{\rm SW}^\Psi$ is as in \eqref{QS-Psi}.
By Theorem \ref{T:SW=Wh}, for every $\pi \in \mca{M}_L(\widehat{\HH}_I)$  we have
\begin{equation}\label{PsiGGSW}
\mca{F}_{\rm GG, \theta}^\Psi(\pi) \simeq \mca{F}_{\rm SW}^\Psi({\rm IM}^{*} \circ \phi_\theta^*(\pi)).
\end{equation}

First, we consider the principal series representation of $\wt{\GL}_r^{(n)}$ of type (\hyperref[C1]{C1}). We have 
$$\wt{T} \simeq (\wt{T}_1 \times ... \times \wt{T}_r)/H,$$
where $H=\set{(\zeta_j)_j: \prod_j \zeta_j=1}$ and $\wt{T}_j \subset \wt{T}$ is the covering torus associated to $Y_j:= \Z e_j$. One has $Z(\wt{T}_j) = \wt{(T_j)^{n_\alpha}}$, and we set
$$\theta_j:=\theta|_{Z(\wt{T}_j)},$$
which is an unramified $\iota$-genuine character. Consider the character 
$$\chi_{s_j}: T_j \simeq F^\times \longrightarrow \C^\times$$
 given by $\chi_{s_j}(a^{e_j}) = \val{a}^{s_j}$. This gives
$$i_\theta(\chi_{s_j}):= i(\theta_j) \otimes \chi_{s_j} \in \Irr_\iota(\wt{T}_j),$$
where $i(\theta_j) \in \Irr_\iota(\wt{T})$ is the unique $\iota$-genuine representation with central character $\theta_j$
determined by the Stone-von Neumann theorem. For $s=(s_1, ..., s_r) \in \C^r$, we have 
$$i_\theta(\chi_s): = \otimes_j i_\theta(\chi_{s_j}) \in \Irr_\iota(\wt{T})$$
and thus the $\iota$-genuine principal series representation $\tilde{\mbm{I}}_\theta(\chi_s):={\rm Ind}_{\wt{B}}^{\wt{G}}(i_\theta(\chi_s))$, where $\wt{G}$ acts on the left by right translation. Naturally, $\tilde{\mbm{I}}_\theta(\chi_s)^I$ is a left $\widehat{\HH}_I$-module. 
Similarly, one has the unramified principal series $\mbm{I}(\chi_s)$ of $\GL_r$ with left action, and thus $\mbm{I}(\chi_s)^I \in \mca{M}_L(\HH(S_r^\aff))$. We have
\begin{equation} \label{pt-ps}
\phi_\theta^*(\tilde{\mbm{I}}_\theta(\chi_s)^I) = \mbm{I}(\chi_{n_\alpha s})^I.
\end{equation}


Since 
$$\mca{F}_{\rm SW}^\Psi: \mca{M}_L(\HH(S_r^\aff)) \longrightarrow \mca{M}_L(U_q(\hat{\mfr{sl}}(n_\alpha)))$$
is a monoidal functor (see \cite[Proposition 4.8]{ChPr96}), one has
\begin{equation} \label{E:mon}
\mca{F}_{\rm SW, \theta}^\Psi(\tilde{\mbm{I}}_\theta(\chi_s)^I) \simeq  \mca{F}_{\rm SW, \theta}^\Psi(i_\theta(\chi_{s_1})^{I_1}) \otimes ... \otimes \mca{F}_{\rm SW, \theta}^\Psi(i_\theta(\chi_{s_r})^{I_r}),
\end{equation}
where 
$$I_j = I \cap T_j = T_j(O_F).$$
In fact, we can directly verify \eqref{E:mon} as follows.

\begin{prop} \label{P:FSW-V}
For every $1\lest j \lest r$ one has 
\begin{equation} \label{E:SW-V}
\mca{F}_{\rm SW, \theta}^\Psi(i_\theta(\chi_{s_j})^{I_j}) \simeq V(n_\alpha s_j)
\end{equation}
as left $U_q(\hat{\mfr{sl}}(n_\alpha))$-modules, where $V(n_\alpha s_j)$ is the shifted evaluation representation in Definition \ref{D:eva}.
Moreover,
\begin{equation} \label{E:SW-ten}
\mca{F}_{\rm SW, \theta}^\Psi(\tilde{\mbm{I}}_\theta(\chi_s)^I) \simeq V(n_\alpha s_1) \otimes ... \otimes V(n_\alpha s_j) \otimes ... \otimes V(n_\alpha s_r).
\end{equation}
\end{prop}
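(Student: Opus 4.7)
The plan is to reduce the computation via $\phi_\theta^*$ and \eqref{pt-ps} to an explicit tensor product calculation on the side of $\HH(S_r^\aff)$-modules, using the Bernstein presentation. The rank-one analogue of \eqref{pt-ps} identifies $\phi_\theta^*(i_\theta(\chi_{s_j})^{I_j})$ with a one-dimensional left $\HH(S_1^\aff)\simeq\C[\Z]$-module $\C_{\chi_{n_\alpha s_j}}$ on which the Bernstein element $e_1\in\C[Y]$ acts by $q^{-n_\alpha s_j}$.

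For \eqref{E:SW-V}, in the rank-one case $\msc{V}_{\rm SW}^{[n_\alpha]}=\C[\Z]$, and inspection of $\tau_{n_\alpha}$ shows that the right $\HH(S_1^\aff)$-action is the shift $v_i\cdot e_1 = v_{i-n_\alpha}$. Hence
$$\mca{F}_{\rm SW,\theta}^\Psi(i_\theta(\chi_{s_j})^{I_j}) \simeq \C[\Z]\otimes_{\C[\Z]}\C_{\chi_{n_\alpha s_j}} \simeq \C[\Z]/\langle v_i - q^{-n_\alpha s_j}\,v_{i-n_\alpha} : i\in\Z\rangle,$$
which is exactly the shifted evaluation module $V(n_\alpha s_j)$ of Definition \ref{D:eva}. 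Since the subspace $S_{n_\alpha,n_\alpha s_j}\subset\C[\Z]$ is stable under $U_q(\hat{\mfr{sl}}(n_\alpha))$, the quotient inherits the $U_q$-action in a compatible way.

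For \eqref{E:SW-ten}, the Bernstein presentation together with \eqref{pt-ps} yields $\mbm{I}(\chi_{n_\alpha s})^I \simeq \HH(S_r^\aff)\otimes_{\C[Y]}\C_{\chi_{n_\alpha s}}$ as left $\HH(S_r^\aff)$-modules, so
$$\mca{F}_{\rm SW,\theta}^\Psi(\tilde{\mbm{I}}_\theta(\chi_s)^I) \simeq \msc{V}_{\rm SW}^{[n_\alpha]}\otimes_{\C[Y]}\C_{\chi_{n_\alpha s}}.$$
From $\tau_{n_\alpha}$ one reads off that the right action of each $e_j\in\C[Y]$ on $\msc{V}_{\rm SW}^{[n_\alpha]}\simeq (\C[\Z])^{\otimes r}$ affects only the $j$-th tensor factor via $v_i\mapsto v_{i-n_\alpha}$, whence the tensor product decomposes factor-wise:
$$\msc{V}_{\rm SW}^{[n_\alpha]}\otimes_{\C[Y]}\C_{\chi_{n_\alpha s}} \simeq \bigotimes_{j=1}^{r}\bigl(\C[\Z]\otimes_{\C[e_j^{\pm 1}]}\C_{\chi_{n_\alpha s_j}}\bigr) \simeq \bigotimes_{j=1}^{r} V(n_\alpha s_j).$$
The left $U_q(\hat{\mfr{sl}}(n_\alpha))$-action on $(\C[\Z])^{\otimes r}$, defined through the iterated coproduct $\Delta^{r-1}$, commutes with the right $\C[Y]$-action, so it descends to the standard tensor product $U_q(\hat{\mfr{sl}}(n_\alpha))$-action on $\bigotimes_j V(n_\alpha s_j)$. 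Alternatively, once \eqref{E:SW-V} is known, \eqref{E:SW-ten} can be recovered from the monoidality of $\mca{F}_{\rm SW}^\Psi$ established in \cite[Proposition 4.8]{ChPr96}.

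The main subtlety is parameter bookkeeping: one must verify that the Bernstein presentation of $\mbm{I}(\chi_{n_\alpha s})^I$ yields precisely the character $e_j\mapsto q^{-n_\alpha s_j}$ on each factor $\C_{\chi_{n_\alpha s_j}}$, so that the shift parameter in Definition \ref{D:eva} is exactly $n_\alpha s_j$ without spurious modular twist. This normalization is built into the identification \eqref{pt-ps}; the remainder is a direct tensor product computation.
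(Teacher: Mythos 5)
Your proof follows essentially the same route as the paper's: reduce via $\phi_\theta^*$ and \eqref{pt-ps} to the linear side, use the Bernstein presentation to express $\mbm{I}(\chi_{n_\alpha s})^I$ as induced from $\C[Y]$, collapse the tensor product over $\HH(S_r^\aff)$ to one over $\C[Y]$, and factor over the rank-one subalgebras $\C[Y_j]$. You spell out the rank-one computation with the explicit quotient $\C[\Z]/\langle v_i - q^{-n_\alpha s_j}v_{i-n_\alpha}\rangle$, which the paper compresses to ``it is now easy to check,'' and you also note the alternative derivation of \eqref{E:SW-ten} from monoidality, which the paper also mentions.

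One small internal inconsistency: you assert that $e_1$ acts on $\C_{\chi_{n_\alpha s_j}}$ by $q^{-n_\alpha s_j}$, yet the displayed quotient $\C[\Z]/\langle v_i - q^{-n_\alpha s_j}v_{i-n_\alpha}\rangle$ arises from the relation $v_{i-n_\alpha} = c\,v_i$ with $c = q^{n_\alpha s_j}$. The two signs disagree. It is your displayed quotient that correctly matches $V(n_\alpha s_j)$ (since $v_i - q^{-n_\alpha s_j}v_{i-n_\alpha}$ is equivalent to $v_i - q^{n_\alpha s_j} v_{i+n_\alpha}$, i.e., $S_{n_\alpha, n_\alpha s_j}$), so the eigenvalue of $e_1$ in your bookkeeping paragraph should read $q^{n_\alpha s_j}$, not $q^{-n_\alpha s_j}$. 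This is precisely the ``spurious modular twist'' you flag; pinning it down requires specifying the Bernstein normalization used in \eqref{pt-ps}, which the paper likewise leaves implicit.
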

\begin{proof}
Fix $j$ and write $Y_j=\Z e_j, Y_{Q,n, j}= \Z n_\alpha e_j$ and $\msc{X}_{Q,n, j} = Y_j/Y_{Q,n, j}$. Consider $\HH(S_1^\aff) = \C[Y_j]$. We see that $\HH(S_1^\aff) \simeq \C[Y_{Q,n, j}] = \C[\Z n_\alpha e_j]$ acts naturally on $\msc{V}_{\rm SW}^{[n_\alpha]} \simeq \C[\msc{X}_{Q,n, j}]  \otimes_\C \C[Y_{Q,n,j}]$. Now,
$$\mca{F}_{\rm SW, \theta}^\Psi(i_\theta(\chi_{s_j})^{I_j}) =\msc{V}_{\rm SW}^{[n_\alpha]}  \otimes_{\HH(S_1^\aff)}  \phi_\theta^* \left( i_\theta(\chi_{s_j})^{I_j}\right)  \simeq \msc{V}_{\rm SW}^{[n_\alpha]} \otimes_{\HH(S_1^\aff)} (\chi_{n_\alpha s_j})^{I_j},$$
where the second isomorphism follows from \eqref{pt-ps}. It is now easy to check the isomorphism in \eqref{E:SW-V}.

For the second isomorphism, we note $\phi_\theta^*(\tilde{\mbm{I}}_\theta(\chi_{s})^I) \simeq \mbm{I}(\chi_{n_\alpha s})^I \simeq  \HH(S_r^\aff) \otimes_{\C[Y]} (\chi_{n_\alpha s})^{I \cap T} $ and thus
$$
\begin{aligned}
\mca{F}_{\rm SW, \theta}^\Psi(\tilde{\mbm{I}}(\chi_s)^I) & \simeq \msc{V}_{\rm SW}^{[n_\alpha]}   \otimes_{\HH(S_r^\aff)}   \mbm{I}(\chi_{n_\alpha s})^I \\
& \simeq (\msc{V}_{\rm SW}^{[n_\alpha]}|_{\C[Y]})  \otimes_{\C[Y]} (\chi_{n_\alpha s})^{I \cap T}\\
&  \simeq \bigotimes_{j=1}^r ( \C[\msc{X}_{Q,n,j}] \otimes \C[Y_j])  \otimes_{\C[Y_j]} (\chi_{n_\alpha s_j})^{T_j} \\
& \simeq \bigotimes_{j=1}^r \mca{F}_{\rm SW, \theta}^\Psi(i_\theta(\chi_{s_j})^{I_j}).
\end{aligned}
$$
This gives \eqref{E:mon}. Coupled with \eqref{E:SW-V}, we obtain \eqref{E:SW-ten}.
\end{proof}

\begin{cor}
The unramified principal series $\tilde{\mbm{I}}_\theta(\chi_s)$ is reducible if and only if $q = q^{n_\alpha (s_i - s_j)}$ for some $i\ne j$.
\end{cor}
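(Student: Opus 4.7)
The natural approach is to transfer the question through the algebra isomorphism $\phi_\theta$ and invoke the classical reducibility criterion for unramified principal series of $\GL_r$. By the Borel--Casselman equivalence of categories cited in the introduction, the smooth $\wt{\GL}_r^{(n)}$-representation $\tilde{\mbm{I}}_\theta(\chi_s)$ is reducible if and only if its Iwahori-fixed part $\tilde{\mbm{I}}_\theta(\chi_s)^I$ is reducible as a $\widehat{\HH}_I$-module. Because $\phi_\theta: \HH(S_r^\aff) \to \widehat{\HH}_I$ is an algebra isomorphism, the induced functor $\phi_\theta^*$ preserves and reflects irreducibility, so by \eqref{pt-ps} this is equivalent to the reducibility of $\mbm{I}(\chi_{n_\alpha s})^I$ as an $\HH(S_r^\aff)$-module, hence to the reducibility of the linear principal series $\mbm{I}(\chi_{n_\alpha s})$ on $\GL_r$ itself (again by Borel--Casselman).

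Next I would invoke the classical Bernstein--Zelevinsky criterion: the unramified principal series $\mbm{I}(\chi_t)$ of $\GL_r$ is reducible if and only if there exist $i \ne j$ with $\chi_{t_i} \cdot \chi_{t_j}^{-1} = \val{\cdot}^{\pm 1}$, equivalently $t_i - t_j = \pm 1$. Specializing at $t = n_\alpha s$ gives reducibility precisely when $n_\alpha (s_i - s_j) = \pm 1$ for some $i \ne j$, i.e.\ $q^{n_\alpha(s_i - s_j)} = q^{\pm 1}$. Since the condition is symmetric under swapping $i$ and $j$, the two signs collapse into the single assertion $q = q^{n_\alpha(s_i - s_j)}$ for some $i \ne j$, as desired.

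As a consistency check in keeping with the theme of the paper, one can also see this from the quantum affine side. Proposition \ref{P:FSW-V} gives
\begin{equation*}
\mca{F}_{\rm SW, \theta}^\Psi(\tilde{\mbm{I}}_\theta(\chi_s)^I) \simeq V(n_\alpha s_1) \otimes \cdots \otimes V(n_\alpha s_r),
\end{equation*}
and the classical Chari--Pressley criterion for the reducibility of tensor products of (shifted) evaluation representations of $U_q(\hat{\mfr{sl}}(n_\alpha))$ is governed by exactly the same arithmetic constraint on the differences $n_\alpha(s_i - s_j)$. Combined with the faithfulness of $\mca{F}_{\rm SW}^\Psi$ in the stable regime $n_\alpha \gest r$ noted after Theorem \ref{T:SW=Wh}, this provides an independent verification consistent with the corollary.

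The only point requiring care is the bookkeeping of normalization conventions: the scaling $s \mapsto n_\alpha s$ produced by $\phi_\theta$, the convention $\chi_{s_j}(a^{e_j}) = \val{a}^{s_j}$, and the relation between $q$ and $\val{\cdot}$ must all be matched so that the classical criterion $t_i - t_j = \pm 1$ reads as $q^{n_\alpha(s_i - s_j)} = q^{\pm 1}$. Once these are aligned, the equivalence is immediate and no new calculation is needed.
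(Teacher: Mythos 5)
Your proposal is correct and follows the same two routes the paper itself sketches, merely reversing the order of emphasis: your main argument (Borel--Casselman, the isomorphism $\phi_\theta$, the identification $\phi_\theta^*(\tilde{\mbm{I}}_\theta(\chi_s)^I)=\mbm{I}(\chi_{n_\alpha s})^I$, then the classical Bernstein--Zelevinsky criterion for $\GL_r$) is precisely the paper's ``In fact, more directly\dots'' argument via the Shimura lift, while your ``consistency check'' through Proposition \ref{P:FSW-V} and the Chari--Pressley tensor-product criterion is exactly what the paper cites first. No gaps; the bookkeeping you flag (the $n_\alpha$-scaling and the $q\leftrightarrow\val{\cdot}$ convention) is indeed the only thing to watch.
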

\begin{proof}
This follows immediately from combining Proposition \ref{P:FSW-V} and \cite[\S 4.8 Corollary]{ChPr96}. In fact, more directly, since $\widehat{\mca{H}}_I \simeq \mca{H}(S_r^{\rm aff})$ for $\wt{\GL}_r$ of type (\hyperref[C1]{C1}), the points of reducibility of $\tilde{\mbm{I}}_\theta(\chi_s)$ are exactly those of its Shimura lifted principal series $\mbm{I}(\chi_{n_\alpha s})$ of $\GL_r$ modulo an $n_\alpha$-scaling, the latter is well-known as given in the statement.
\end{proof}

In view of the Theorem \ref{T:SW=Wh}, the isomorphism in \eqref{E:mon} is an incarnation of Rodier's heredity of Whittaker models \cite{Rod1}. We make this precise as follows. First, one has a natural $\C$-isomorphism
$$\eta(\chi_{s}): \mca{F}_{\rm SW, \theta}^\Psi(\tilde{\mbm{I}}_\theta(\chi_{s})^I) \longrightarrow \mca{F}_{\rm GG, \theta}^\Psi(\tilde{\mbm{I}}_\theta(\chi_{s})^I),$$
by forgetting the additional structure in line (\ref{PsiGGSW}).

For every $j$ one has the natural $\C$-isomorphisms
\begin{equation} \label{E:L1}
 \C[\Z e_j/\Z n_\alpha e_j] \simeq \mca{F}_{\rm GG, \theta}^\Psi( i_\theta(\chi_{s_j})^{I_j}) \simeq \mca{F}_{\rm SW, \theta}^\Psi(i_\theta(\chi_{s_j})^{I_j}) \simeq V(n_\alpha s_j).
\end{equation}
Also, the same argument as in the proof of Proposition \ref{P:FSW-V} gives the $\C$-isomorphisms
\begin{equation} \label{E:Wh-ten}
\begin{aligned}
\mca{F}_{\rm{GG}, \theta}^\Psi(\tilde{\mbm{I}}_\theta(\chi_{s})^I) &  \simeq  \mca{V}_\theta \otimes_{\HH(S_r^\aff)} \mbm{I}(\chi_{n_\alpha s})^I \\
& \simeq \bigotimes_{j=1}^r ( \C[\msc{X}_{Q,n,j}] \otimes \HH(S_1^\aff))  \otimes_{\HH(S_1^\aff)} (\chi_{n_\alpha s_j})^{T_j}  \\
& \simeq \bigotimes_{j=1}^r \mca{F}_{\rm GG, \theta}^\Psi( i_\theta(\chi_{s_j})^{I_j})  \simeq \bigotimes_{j=1}^r V(n_\alpha s_j).
\end{aligned}
\end{equation}
It is easy to see that the diagram
\begin{equation} \label{CD:SWWH}
\begin{tikzcd}
 & \bigotimes_{j=1}^r V(n_\alpha s_j) \\
\mca{F}_{\rm SW, \theta}^\Psi(\tilde{\mbm{I}}_\theta(\chi_s)^I) \ar[rr, "{\eta(\chi_s)}"] \ar[ru] & &  \mca{F}_{\rm GG, \theta}^\Psi(\tilde{\mbm{I}}_\theta(\chi_{s})^I)   \ar[lu]
\end{tikzcd}
\end{equation}
of $\C$-vector spaces commutes. Here the left and right slanted arrows are given by \eqref{E:SW-ten} and \eqref{E:Wh-ten} respectively.

We can interpret equation (\ref{E:Wh-ten}) as Rodier heredity in the following way. Let $\tilde{\mbm{I}}_{\theta^{-1}}(\chi_{-s})$ be the $\iota^{-1}$-genuine principal series where $\overline{G}$ acts on the right by right translation $(f\cdot g)(x):= f(x g^{-1})$ for $g\in \wt{G}$ and $f\in \tilde{\mbm{I}}_{\theta^{-1}}(\chi_{-s})$. (Note that this arises from the usual left action of $\overline{G}$ by right translation via the antiinvolution $g\mapsto g^{-1}$.) Then one has an isomorphism 
\begin{equation} \label{E:vee-iso}
\tilde{\mbm{I}}_{\theta^{-1}}(\chi_{-s})^{\vee}\simeq \tilde{\mbm{I}}_\theta(\chi_{s})
\end{equation}
 of $\iota$-genuine left $\wt{G}$-representations. Taking $I$-fixed vectors gives isomorphisms
\begin{equation*} 
(\tilde{\mbm{I}}_{\theta^{-1}}(\chi_{-s})^I)^\vee \simeq (\tilde{\mbm{I}}_{\theta^{-1}}(\chi_{-s})^{\vee})^I \simeq (\tilde{\mbm{I}}_\theta(\chi_{s}))^I
\end{equation*}
 of left $\widehat{\HH}_{I}$-modules. For $\pi$ a right $\widehat{\HH}_I$-module we have $\mca{F}_{\rm{Wh},\theta}^{\Psi}(\pi)=\mca{F}_{\rm{GG},\theta}^{\Psi}(\pi^{\vee})$. It then follows from \eqref{E:Wh-ten} that
\begin{equation*}
\mca{F}_{\rm{Wh}, \theta}^\Psi(\tilde{\mbm{I}}_{\theta^{-1}}(\chi_{-s})^I)\simeq \bigotimes_{j=1}^r \mca{F}_{\rm Wh, \theta}^\Psi( i_{\theta^{-1}}(\chi_{-s_j})^{I_j}).
\end{equation*}

Now we turn to intertwining operators. For every simple root $\beta = \beta_k$ with $1\lest k \lest r-1$, we have the simple reflection $w_\beta =(k, k+1) \in S_r$ associated with $\beta$. Consider the intertwining map 
$$T(w_\beta): \tilde{\mbm{I}}_\theta(\chi_s) \longrightarrow \tilde{\mbm{I}}_\theta({}^{w_\beta} \chi_s)$$
 as in McNamara \cite[Equation (7.2)]{Mc1}, where they are written $T_{w_{\beta}}$. Note that since $\theta$ is Weyl-invariant, the above notations in the definition of $T(w_\beta)$ are indeed well-defined.  Furthermore, with McNamara's normalization these intertwining operators satisfy the braid relations.
 
 Passing to $I$-fixed vectors gives a map of left $\widehat{\mca{H}}_{I}$-modules $\tilde{\mbm{I}}_\theta(\chi_s)^{I} \longrightarrow \tilde{\mbm{I}}_\theta({}^{w_\beta} \chi_s)^{I}$, which we still write as $T(w_{\beta})$. 
Note ${}^{w_\beta}\chi_s = \chi_{w_\beta(s)}$ and further $w_\beta$ acts on $s=(s_1, ..., s_j, ..., s_r)$ via that on the index $j$. Consider the intertwining map
$$T^\sharp(w_\beta): \tilde{\mbm{I}}_{\theta^{-1}}({}^{w_\beta}\chi_{-s})^I \longrightarrow \tilde{\mbm{I}}_{\theta^{-1}}(\chi_{-s})^I.$$
induced from $T(w_\beta)$ and \eqref{E:vee-iso}. In view of $\mca{F}_{\rm{Wh},\theta}^{\Psi}(\pi)=\mca{F}_{\rm{GG},\theta}^{\Psi}(\pi^{\vee})$, we have the following commutative diagram arising from \eqref{CD:SWWH}:


\begin{equation} \label{CD:1}
\begin{tikzcd}
& \bigotimes_{j=1}^r V(n_\alpha s_j) \arrow[from=dl] \arrow[from=rd] \arrow{dd}[near start]{\mfr{F}(w_\beta)^*} &  \\ 
\mca{F}_{\rm SW, \theta}^\Psi(\tilde{\mbm{I}}_\theta(\chi_s)^I) \arrow[rr, crossing over, "{\qquad \qquad \qquad \eta(\chi_s)}"'] \arrow[dd, "{\mca{F}_{\rm SW, \theta}^\Psi(T(w_\beta))}"'] & &   \mca{F}_{\rm Wh, \theta}^\Psi(\tilde{\mbm{I}}_{\theta^{-1}}(\chi_{-s})^I)\\
& \bigotimes_{j=1}^r V(n_\alpha s_{w_\beta(j)}) \arrow[from=dl] \arrow[from=rd] &  \\
\mca{F}_{\rm SW, \theta}^\Psi(\tilde{\mbm{I}}_\theta({}^{w_\beta}\chi_s)^I) \arrow[rr, "{\eta({}^{w_\beta}\chi_s)}"'] &  & \mca{F}_{\rm Wh, \theta}^\Psi(\tilde{\mbm{I}}_{\theta^{-1}}({}^{w_\beta}\chi_{-s})^I) \arrow[from=uu, "{ \mca{F}_{\rm Wh, \theta}^\Psi(T^\sharp(w_\beta)) }"] .
\end{tikzcd}
\end{equation}

Here, the front square diagram, and in particular the two maps $\mca{F}_{\rm SW, \theta}^\Psi(T(w_\beta))$ and $\mca{F}_{\rm Wh, \theta}^\Psi(T^\sharp(w_\beta))$ are induced from $T(w_\beta)$ and $T^\sharp(w_\beta)$. The map $\mfr{F}(w_\beta)^*$ naturally arises from the front square in view of \eqref{CD:SWWH}.
In fact, by the functorial and monoidal property of $\mca{F}_{\rm SW}$, one has (noting that $w_\beta = (k, k+1) \in S_r$)
$$\mfr{F}(w_\beta)^* = {\rm id} \otimes  ... \otimes \mfr{F}_k(w_\beta) \otimes ...  \otimes {\rm id}$$
where 
$$\mfr{F}_k(w_\beta):   V(n_\alpha s_k) \otimes V(n_\alpha s_{k+1})  \longrightarrow V(n_\alpha s_{k+1}) \otimes V(n_\alpha s_k) $$
is a  $U_q(\hat{\mfr{sl}}(n_\alpha))$-homomorphism. 

It is clear from \eqref{CD:1} that $\mfr{F}(w_\beta)^*$ can be represented by any matrix form of $\mca{F}_{\rm Wh, \theta}^\Psi(T^\sharp(w_\beta))$. We write
$$s^*= -w_\beta(s) = (-s_1, -s_2, ..., -s_{k+1}, -s_k, ..., -s_r) \in \C^r.$$
The operator $\mca{F}_{\rm Wh, \theta}^\Psi(T^\sharp(w_\beta))$ is represented by a so-called local scattering square matrix 
$$[\mca{S}(y', y; w_\beta, \chi_{s^*})]_{y', y\in \mfr{R}_{n_\alpha}},$$
 of size $\val{\msc{X}_{Q,n}}$, where $\mfr{R}_{n_\alpha} \subset Y$ is the set of representatives of $\msc{X}_{Q,n}$ in \eqref{E:Rm}. This matrix  was first studied by Kazhdan--Patterson \cite{KP} for $\wt{\GL}_r$ and then generalized by McNamara \cite{Mc2} for general $\wt{G}$.  More precisely, for every $y= \sum_j y_j e_j\in \mfr{R}_{n_\alpha}$ with $y_j \in [0, n_\alpha -1]$, one has
 $$\lambda^{-s}_y \in \mca{F}_{\Wh, \theta}^\Psi(\tilde{\mbm{I}}_{\theta^{-1}}(\chi_{-s})^I)\simeq \mca{F}_{\rm{GG}, \theta}^\Psi(\tilde{\mbm{I}}_{\theta}(\chi_{s})^I),$$
 the naturally associated element via \eqref{E:L1} and \eqref{E:Wh-ten}. The  matrix $[\mca{S}(y', y; w_\beta, \chi_{s^*})]$ is then by definition the one satisfying
\begin{equation} \label{F:sca}
\mca{F}_{\rm Wh, \theta}^\Psi(T^\sharp(w_\beta))(\lambda_{y'}^{-s}) = \sum_{y \in \mfr{R}_{n_\alpha}}  \mca{S}(y', y; w_\beta, \chi_{s^*}) \cdot \lambda_y^{s^*}.
\end{equation} 

Note that we also have $u_{y_j}^{s_j} \in V(n_\alpha s_j)$ for every $y_j$ as above, using \eqref{E:L1}. Hence, for $y = \sum_j y_j e_j \in \mfr{R}_{n_\alpha}$, one has
  $$u_y^s:= u_{y_1}^{s_1} \otimes ... \otimes u_{y_j}^{s_j} \otimes ... \otimes u_{y_r}^{s_r} \in V(n_\alpha s_1) \otimes ... \otimes V(n_\alpha s_r),$$
which corresponds to  $\lambda_y^{-s}$ in the right top slanted arrow in \eqref{CD:1}. Similar correspondence holds for $\lambda_{y}^{s^*}$ and $u_{y}^{-s^*}$.  The commutativity of \eqref{CD:1} and the discussion above immediately give the following.

 \begin{thm} \label{T:sca}
Let the domain and codomain of the $U_q(\hat{\mfr{sl}}(n_\alpha))$-homomorphism $\mfr{F}(w_\beta)^*$ be endowed with the bases $\set{u_{y'}^s:  y' \in \mfr{R}_{n_\alpha}}$ and $\set{u_y^{-s^*}: y\in \mfr{R}_{n_\alpha}}$ respectively. Then $\mfr{F}(w_\beta)^*$ is represented by the local scattering matrix $[\mca{S}(y', y; w_\beta, \chi_{s^*})]_{y', y \in \mfr{R}_{n_\alpha}}$.
 \end{thm}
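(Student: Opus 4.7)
The plan is to extract the statement from the commutative diagram \eqref{CD:1} by tracking basis elements along the various identifications. The key observation is that the map $\mfr{F}(w_\beta)^*$ was \emph{defined} as the unique $U_q(\hat{\mfr{sl}}(n_\alpha))$-homomorphism making the top and bottom squares of \eqref{CD:1} commute, so its matrix in any pair of chosen bases agrees with the matrix of $\mca{F}_{\rm Wh, \theta}^\Psi(T^\sharp(w_\beta))$ once the two pairs of bases are identified via the slanted isomorphisms.

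The first step is to record the identifications. By construction, the right slanted arrow $\bigotimes_j V(n_\alpha s_j) \to \mca{F}_{\rm Wh, \theta}^\Psi(\tilde{\mbm{I}}_{\theta^{-1}}(\chi_{-s})^I)$ is the composite of \eqref{E:Wh-ten} with the tensor product of the isomorphisms \eqref{E:L1}, and by definition this sends the pure tensor $u_{y'}^s = u_{y'_1}^{s_1} \otimes \cdots \otimes u_{y'_r}^{s_r}$ to the element $\lambda_{y'}^{-s}$ used in \eqref{F:sca}. The analogous arrow at the bottom of the diagram sends $u_y^{-s^*}$ (with the components of $-s^*$ indexed according to $w_\beta \cdot s$) to $\lambda_y^{s^*}$. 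Thus expanding a vector in the basis $\{u_{y'}^s\}$ on the top right and in the basis $\{u_y^{-s^*}\}$ on the bottom right is, after applying the slanted isomorphisms, the same as expanding in the bases $\{\lambda_{y'}^{-s}\}$ and $\{\lambda_y^{s^*}\}$ respectively.

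The second step is the diagram chase. By commutativity of the front face of \eqref{CD:1} combined with the back face and the two slanted squares, one has for each $y' \in \mfr{R}_{n_\alpha}$
\[
\mfr{F}(w_\beta)^*(u_{y'}^s) \longleftrightarrow \mca{F}_{\rm Wh, \theta}^\Psi(T^\sharp(w_\beta))(\lambda_{y'}^{-s})
\]
under the right-hand slanted identifications at the top and bottom of the cube. Applying \eqref{F:sca} to the right-hand side and then translating $\lambda_y^{s^*}$ back to $u_y^{-s^*}$ yields
\[
\mfr{F}(w_\beta)^*(u_{y'}^s) = \sum_{y \in \mfr{R}_{n_\alpha}} \mca{S}(y', y; w_\beta, \chi_{s^*}) \cdot u_y^{-s^*},
\]
which is exactly the assertion of the theorem.

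The only subtle point — and the place where I would be most careful — is the bookkeeping of how $w_\beta$ permutes the tensor factors and how the shift $s \mapsto s^* = -w_\beta(s)$ enters on the codomain. This affects how $u_y^{-s^*}$ is matched with $\lambda_y^{s^*}$ through the bottom slanted arrow, since the bottom tensor product is $V(n_\alpha s_{w_\beta(1)}) \otimes \cdots \otimes V(n_\alpha s_{w_\beta(r)})$. Once one checks that the conventions in \eqref{E:L1}, \eqref{E:Wh-ten} and the definition of the local scattering matrix \eqref{F:sca} are compatible under this permutation (which is a matter of unwinding definitions rather than of substantive argument), the theorem is an immediate consequence of the commutativity of \eqref{CD:1}.
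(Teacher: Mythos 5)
Your proposal is correct and takes essentially the same approach as the paper: the paper states that Theorem \ref{T:sca} follows immediately from the commutativity of \eqref{CD:1} together with the correspondences $u_y^s \leftrightarrow \lambda_y^{-s}$ and $u_y^{-s^*} \leftrightarrow \lambda_y^{s^*}$ under the slanted arrows, which is precisely the diagram chase you carry out. Your explicit flag of the permutation bookkeeping is a useful caveat but does not change the substance of the argument.
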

 
Consider the matrix 
$$[\mca{M}(y', y; w_\beta, \chi_{s^*})]:=[\mca{S}(w_\beta(y'), y; w_\beta, \chi_{s^*})]_{y, y' \in \mfr{R}_{n_\alpha}}$$
and let  
$$R(w_\beta, \chi_{s^*}) \in \End_\C(V(n_\alpha s_1) \otimes ... \otimes V(n_\alpha s_r))$$
be the $\C$-endomorphism  represented by $[\mca{M}(y', y; w_\beta, \chi_{s^*})]_{y, y' \in \mfr{R}_{n_\alpha}}$ with respect to the basis $\set{u_y^s: y\in \mfr{R}_{n_\alpha}}$. 
It is then easy to see that
$$\mfr{F}(w_\beta)^*  = \tau(w_\beta) \circ R(w_\beta, \chi_{s^*})$$
where $\tau(w_\beta)$ is the obvious $\C$-homomorphism induced from the flipping map
$$ V(n_\alpha s_k) \otimes V(n_\alpha s_{k+1}) \longrightarrow V(n_\alpha s_{k+1}) \otimes V(n_\alpha s_k), \quad v\otimes w \mapsto w \otimes v.$$

As a consequence of \eqref{CD:1} and that the $T(w_{\beta})$ satisfy the braid relations, the $\mfr{F}(w_\beta)^*$'s satisfy the braid relations as well. Thus, the operators $R(w_\beta, \chi_{s^*})$ solve the quantum Yang--Baxter equation, and the representing matrices $[\mca{M}(y', y; w_\beta, \chi_{s^*})]$  can be properly called $R$-matrices. One can check easily that it agrees with the one given in \cite[Page 105]{BBB19}. We also note that the matrix $[\mca{M}(y', y; w_\beta, \chi_{s^*})]$ is exactly a ``local coefficients matrix" associated with $\tilde{\mbm{I}}_{\theta^{-1}}(\chi_{s^*})$ and $w_\beta$ as in  \cite[\S 3.2]{GSS2}.

\section{Some remarks} \label{S:rmk}
The functor $\mca{F}_{\rm GG}$ is defined for any root system type, while
$$\mca{F}_{\rm SW}^\Psi: \mca{M}_L(\HH(S_r^\aff)) \longrightarrow \mca{M}_L(U_q(\hat{\mfr{sl}}(n_\alpha)))$$
is for type $A$ only. It is a natural question to ask for a generalization of $\mca{F}_{\rm SW}^\Psi$, if we replace the domain or codomain of $\mca{F}_{\rm SW}^\Psi$ by more general root system types.

First, for a general semisimple Lie algebra $\mfr{g}$ it was shown by \cite{KKK18,Fuj20} that there exists a natural functor
$$\mca{F}_{\rm SW}^{\mfr{g}}: \bigoplus_{\beta} \mca{M}_L(R^J(\beta)) \longrightarrow \mca{M}_L(U_q(\hat{\mfr{g}})),$$
where $R^J(\beta)$ is certain quiver algebra given by Khovanov--Lauda \cite{KhLa09} and Rouquier \cite{Rou}. Many results are established regarding $\mca{F}_{\rm SW}^{\mfr{g}}$, especially those pertaining to the categorification of the two sides of $\mca{F}_{\rm SW}^{\mfr{g}}$.

On the other hand, instead of $\HH(S_r^\aff)$ in $\mca{F}_{\rm SW}^\Psi$ above, if one considers affine Hecke algebras of general Cartan type, for example of type B or C, then the recent work of W.-Q. Wang,  H.-C. Bao and their collaborators \cite{BKLW18, BWW18, FLLLWW} have made advances in the framework of quantum symmetric pairs. In particular, a  Schur--Weyl type duality between affine Hecke algebras of type $B_r$ or $C_r$ and certain coideal subalgebra of $U_q(\mfr{gl}(k))$ is established. We refer the reader to loc. cit. and references therein for details.


In the geometric setting, the work of Gaitsgory and Lysenko \cite{Gai08, GaLy18, Lys17} represents some significant progress towards understanding a certain Whittaker category in the metaplectic setting and also its relation to the quantum groups in the framework of the quantum Langlands program (see \cite{Gai16d}). It will be very interesting to see what would be the analogue of such results in the classical $p$-adic context, and conversely what role the quantum affine Schur--Weyl functor plays in the formulation of some geometric results.


\begin{bibdiv}
\begin{biblist}[\resetbiblist{9999999}]*{labels={alphabetic}}

\bib{Ant}{article}{
  author={Antor, Jonas},
  title={Affine versions of Schur--Weyl duality},
  status={Master's Thesis, 2020, available at \url {https://www.math.uni-bonn.de/ag/stroppel/Masterarbeit_Antor-4-1.pdf}},
}

\bib{Arn69}{article}{
  author={Arnol\cprime d, V. I.},
  title={The cohomology ring of the group of dyed braids},
  language={Russian},
  journal={Mat. Zametki},
  volume={5},
  date={1969},
  pages={227--231},
  issn={0025-567X},
  review={\MR {242196}},
}

\bib{BKLW18}{article}{
  author={Bao, Huanchen},
  author={Kujawa, Jonathan},
  author={Li, Yiqiang},
  author={Wang, Weiqiang},
  title={Geometric Schur duality of classical type},
  journal={Transform. Groups},
  volume={23},
  date={2018},
  number={2},
  pages={329--389},
  issn={1083-4362},
  review={\MR {3805209}},
  doi={10.1007/s00031-017-9447-4},
}

\bib{BWW18}{article}{
  author={Bao, Huanchen},
  author={Wang, Weiqiang},
  author={Watanabe, Hideya},
  title={Multiparameter quantum Schur duality of type B},
  journal={Proc. Amer. Math. Soc.},
  volume={146},
  date={2018},
  number={8},
  pages={3203--3216},
  issn={0002-9939},
  review={\MR {3803649}},
  doi={10.1090/proc/13749},
}

\bib{BouA1}{book}{
  author={Bourbaki, Nicolas},
  title={Algebra I. Chapters 1--3},
  series={Elements of Mathematics (Berlin)},
  note={Translated from the French; Reprint of the 1989 English translation [ MR0979982 (90d:00002)]},
  publisher={Springer-Verlag, Berlin},
  date={1998},
  pages={xxiv+709},
  isbn={3-540-64243-9},
  review={\MR {1727844}},
}

\bib{BouL2}{book}{
  author={Bourbaki, Nicolas},
  title={Lie groups and Lie algebras. Chapters 4--6},
  series={Elements of Mathematics (Berlin)},
  note={Translated from the 1968 French original by Andrew Pressley},
  publisher={Springer-Verlag, Berlin},
  date={2002},
  pages={xii+300},
  isbn={3-540-42650-7},
  review={\MR {1890629}},
  doi={10.1007/978-3-540-89394-3},
}

\bib{Bri73}{article}{
  author={Brieskorn, Egbert},
  title={Sur les groupes de tresses [d'apr\`es V. I. Arnol\cprime d]},
  language={French},
  conference={ title={S\'{e}minaire Bourbaki, 24\`eme ann\'{e}e (1971/1972), Exp. No. 401}, },
  book={ publisher={Springer, Berlin}, },
  date={1973},
  pages={21--44. Lecture Notes in Math., Vol. 317},
  review={\MR {0422674}},
}

\bib{BBB19}{article}{
  author={Brubaker, Ben},
  author={Buciumas, Valentin},
  author={Bump, Daniel},
  title={A Yang-Baxter equation for metaplectic ice},
  journal={Comm. Number Theory and Physics},
  volume={13},
  date={2019},
  number={1},
  pages={101--148},
  doi={10.4310/CNTP.2019.v13.n1.a4},
}

\bib{BuHe03}{article}{
  author={Bushnell, Colin J.},
  author={Henniart, Guy},
  title={Generalized Whittaker models and the Bernstein center},
  journal={Amer. J. Math.},
  volume={125},
  date={2003},
  number={3},
  pages={513--547},
  issn={0002-9327},
  review={\MR {1981032}},
}

\bib{ChPr95}{book}{
  author={Chari, Vyjayanthi},
  author={Pressley, Andrew},
  title={A guide to quantum groups},
  note={Corrected reprint of the 1994 original},
  publisher={Cambridge University Press, Cambridge},
  date={1995},
  pages={xvi+651},
  isbn={0-521-55884-0},
  review={\MR {1358358}},
}

\bib{ChPr96}{article}{
  author={Chari, Vyjayanthi},
  author={Pressley, Andrew},
  title={Quantum affine algebras and affine Hecke algebras},
  journal={Pacific J. Math.},
  volume={174},
  date={1996},
  number={2},
  pages={295--326},
  issn={0030-8730},
  review={\MR {1405590}},
}

\bib{DDF12}{book}{
  author={Deng, Bangming},
  author={Du, Jie},
  author={Fu, Qiang},
  title={A double Hall algebra approach to affine quantum Schur-Weyl theory},
  series={London Mathematical Society Lecture Note Series},
  volume={401},
  publisher={Cambridge University Press, Cambridge},
  date={2012},
  pages={viii+207},
  isbn={978-1-107-60860-3},
  review={\MR {3113018}},
  doi={10.1017/CBO9781139226660},
}

\bib{DoGr07}{article}{
  author={Doty, S. R.},
  author={Green, R. M.},
  title={Presenting affine $q$-Schur algebras},
  journal={Math. Z.},
  volume={256},
  date={2007},
  number={2},
  pages={311--345},
  issn={0025-5874},
  review={\MR {2289877}},
  doi={10.1007/s00209-006-0076-1},
}

\bib{FLLLWW}{article}{
  author={Fan, Zhaobing},
  author={Lai, Chun-Ju},
  author={Li, Yiqiang},
  author={Luo, Li},
  author={Wang, Weiqiang},
  author={Watanabe, Hideya},
  title={Quantum Schur duality of affine type C with three parameters},
  journal={Math. Res. Lett.},
  volume={27},
  date={2020},
  number={1},
  pages={79--114},
  issn={1073-2780},
  review={\MR {4088809}},
}

\bib{FlKa86}{article}{
  author={Flicker, Yuval Z.},
  author={Kazhdan, David A.},
  title={Metaplectic correspondence},
  journal={Inst. Hautes \'{E}tudes Sci. Publ. Math.},
  number={64},
  date={1986},
  pages={53--110},
  issn={0073-8301},
  review={\MR {876160}},
}

\bib{Fuj20}{article}{
  author={Fujita, Ryo},
  title={Geometric realization of Dynkin quiver type quantum affine Schur-Weyl duality},
  journal={Int. Math. Res. Not. IMRN},
  date={2020},
  number={22},
  pages={8353--8386},
  issn={1073-7928},
  review={\MR {4216691}},
  doi={10.1093/imrn/rny226},
}

\bib{Gai08}{article}{
  author={Gaitsgory, Dennis},
  title={Twisted Whittaker model and factorizable sheaves},
  journal={Selecta Math. (N.S.)},
  volume={13},
  date={2008},
  number={4},
  pages={617--659},
  issn={1022-1824},
  review={\MR {2403306}},
  doi={10.1007/s00029-008-0053-0},
}

\bib{Gai16d}{article}{
  author={Gaitsgory, Dennis},
  title={Quantum Langlands Correspondence},
  status={preprint, available at https://arXiv.1601.05279},
}

\bib{GaLy18}{article}{
  author={Gaitsgory, Dennis},
  author={Lysenko, Sergey},
  title={Parameters and duality for the metaplectic geometric Langlands theory},
  journal={Selecta Math. (N.S.)},
  volume={24},
  date={2018},
  number={1},
  pages={227--301},
  issn={1022-1824},
  review={\MR {3769731}},
  doi={10.1007/s00029-017-0360-4},
}

\bib{GG}{article}{
  author={Gan, Wee Teck},
  author={Gao, Fan},
  title={The Langlands-Weissman program for Brylinski-Deligne extensions},
  language={English, with English and French summaries},
  note={L-groups and the Langlands program for covering groups},
  journal={Ast\'erisque},
  date={2018},
  number={398},
  pages={187--275},
  issn={0303-1179},
  isbn={978-2-85629-845-9},
  review={\MR {3802419}},
}

\bib{Ga2}{article}{
  author={Gao, Fan},
  title={Distinguished theta representations for certain covering groups},
  journal={Pacific J. Math.},
  volume={290},
  date={2017},
  number={2},
  pages={333--379},
  doi={10.2140/pjm.2017.290.333},
}

\bib{GGK}{article}{
  author={Gao, Fan},
  author={Gurevich, Nadya},
  author={Karasiewicz, Edmund},
  title={Genuine pro-$p$ Iwahori--Hecke algebras, Gelfand--Graev representations, and some applications},
  status={preprint (2022, 63 pages), available at https://arxiv.org/abs/2204.13053},
}

\bib{GSS2}{article}{
  author={Gao, Fan},
  author={Shahidi, Freydoon},
  author={Szpruch, Dani},
  title={Local coefficients and gamma factors for principal series of covering groups},
  journal={Mem. Amer. Math. Soc.},
  volume={283},
  date={2023},
  number={1399},
  pages={1--148},
  issn={0065-9266},
  isbn={978-1-4704-5681-8},
  isbn={978-1-4704-7400-3},
  review={\MR {4537303}},
  doi={10.1090/memo/1399},
}

\bib{GSS3}{article}{
  author={Gao, Fan},
  author={Shahidi, Freydoon},
  author={Szpruch, Dani},
  title={Restrictions, L-parameters, and local coefficients for genuine representations},
  status={preprint (2021, submitted), available at https://arxiv.org/abs/2102.08859},
}

\bib{GaTs}{article}{
  author={Gao, Fan},
  author={Tsai, Wan-Yu},
  title={On the wavefront sets associated with theta representations},
  journal={Math. Z.},
  volume={301},
  date={2022},
  number={1},
  pages={1--40},
  issn={0025-5874},
  review={\MR {4405642}},
  doi={10.1007/s00209-021-02894-5},
}

\bib{Gre99}{article}{
  author={Green, R. M.},
  title={The affine $q$-Schur algebra},
  journal={J. Algebra},
  volume={215},
  date={1999},
  number={2},
  pages={379--411},
  issn={0021-8693},
  review={\MR {1686197}},
  doi={10.1006/jabr.1998.7753},
}

\bib{IM65}{article}{
  author={Iwahori, N.},
  author={Matsumoto, H.},
  title={On some Bruhat decomposition and the structure of the Hecke rings of ${\germ p}$-adic Chevalley groups},
  journal={Inst. Hautes \'{E}tudes Sci. Publ. Math.},
  number={25},
  date={1965},
  pages={5--48},
  issn={0073-8301},
  review={\MR {185016}},
}

\bib{KKK18}{article}{
  author={Kang, Seok-Jin},
  author={Kashiwara, Masaki},
  author={Kim, Myungho},
  title={Symmetric quiver Hecke algebras and R-matrices of quantum affine algebras},
  journal={Invent. Math.},
  volume={211},
  date={2018},
  number={2},
  pages={591--685},
  issn={0020-9910},
  review={\MR {3748315}},
  doi={10.1007/s00222-017-0754-0},
}

\bib{KP}{article}{
  author={Kazhdan, D. A.},
  author={Patterson, S. J.},
  title={Metaplectic forms},
  journal={Inst. Hautes \'Etudes Sci. Publ. Math.},
  number={59},
  date={1984},
  pages={35--142},
  issn={0073-8301},
  review={\MR {743816}},
}

\bib{KhLa09}{article}{
  author={Khovanov, Mikhail},
  author={Lauda, Aaron D.},
  title={A diagrammatic approach to categorification of quantum groups. I},
  journal={Represent. Theory},
  volume={13},
  date={2009},
  pages={309--347},
  review={\MR {2525917}},
  doi={10.1090/S1088-4165-09-00346-X},
}

\bib{Lys17}{article}{
  author={Lysenko, Sergey},
  title={Twisted Whittaker models for metaplectic groups},
  journal={Geom. Funct. Anal.},
  volume={27},
  date={2017},
  number={2},
  pages={289--372},
  issn={1016-443X},
  review={\MR {3626614}},
  doi={10.1007/s00039-017-0403-1},
}

\bib{Mc1}{article}{
  author={McNamara, Peter J.},
  title={Principal series representations of metaplectic groups over local fields},
  conference={ title={Multiple Dirichlet series, L-functions and automorphic forms}, },
  book={ series={Progr. Math.}, volume={300}, publisher={Birkh\"auser/Springer, New York}, },
  date={2012},
  pages={299--327},
  review={\MR {2963537}},
  doi={10.1007/978-0-8176-8334-413},
}

\bib{Mc2}{article}{
  author={McNamara, Peter J.},
  title={The metaplectic Casselman-Shalika formula},
  journal={Trans. Amer. Math. Soc.},
  volume={368},
  date={2016},
  number={4},
  pages={2913--2937},
  issn={0002-9947},
  review={\MR {3449262}},
  doi={10.1090/tran/6597},
}

\bib{MiSt19}{article}{
  author={Miemietz, Vanessa},
  author={Stroppel, Catharina},
  title={Affine quiver Schur algebras and $p$-adic $GL_n$},
  journal={Selecta Math. (N.S.)},
  volume={25},
  date={2019},
  number={2},
  pages={Paper No. 32, 66},
  issn={1022-1824},
  review={\MR {3948934}},
  doi={10.1007/s00029-019-0474-y},
}

\bib{OrSo83}{article}{
  author={Orlik, Peter},
  author={Solomon, Louis},
  title={Coxeter arrangements},
  conference={ title={Singularities, Part 2}, address={Arcata, Calif.}, date={1981}, },
  book={ series={Proc. Sympos. Pure Math.}, volume={40}, publisher={Amer. Math. Soc., Providence, RI}, },
  date={1983},
  pages={269--291},
  review={\MR {713255}},
}

\bib{Roc09}{article}{
  author={Roche, Alan},
  title={The Bernstein decomposition and the Bernstein centre},
  conference={ title={Ottawa lectures on admissible representations of reductive $p$-adic groups}, },
  book={ series={Fields Inst. Monogr.}, volume={26}, publisher={Amer. Math. Soc., Providence, RI}, },
  date={2009},
  pages={3--52},
  review={\MR {2508719}},
  doi={10.1090/fim/026/01},
}

\bib{Rod1}{article}{
  author={Rodier, Fran\c {c}ois},
  title={Whittaker models for admissible representations of reductive $p$-adic split groups},
  conference={ title={Harmonic analysis on homogeneous spaces}, address={Proc. Sympos. Pure Math., Vol. XXVI, Williams Coll., Williamstown, Mass.}, date={1972}, },
  book={ publisher={Amer. Math. Soc., Providence, R.I.}, },
  date={1973},
  pages={425--430},
  review={\MR {0354942}},
}

\bib{Rou}{article}{
  author={Rouquier, Raphael},
  title={2-Kac-Moddy algebras},
  status={available at https://arxiv.org/abs/0812.5023},
}

\bib{SSV21}{article}{
  author={Sahi, Siddhartha},
  author={Stokman, Jasper V.},
  author={Venkateswaran, Vidya},
  title={Metaplectic representations of Hecke algebras, Weyl group actions, and associated polynomials},
  journal={Selecta Math. (N.S.)},
  volume={27},
  date={2021},
  number={3},
  pages={Paper No. 47, 42},
  issn={1022-1824},
  review={\MR {4273644}},
  doi={10.1007/s00029-021-00654-1},
}

\bib{Sav04}{article}{
  author={Savin, Gordan},
  title={On unramified representations of covering groups},
  journal={J. Reine Angew. Math.},
  volume={566},
  date={2004},
  pages={111--134},
  issn={0075-4102},
  review={\MR {2039325}},
}

\bib{Sol66}{article}{
  author={Solomon, Louis},
  title={The orders of the finite Chevalley groups},
  journal={J. Algebra},
  volume={3},
  date={1966},
  pages={376--393},
  issn={0021-8693},
  review={\MR {199275}},
  doi={10.1016/0021-8693(66)90007-X},
}

\bib{Som97}{article}{
  author={Sommers, Eric},
  title={A family of affine Weyl group representations},
  journal={Transform. Groups},
  volume={2},
  date={1997},
  number={4},
  pages={375--390},
  issn={1083-4362},
  review={\MR {1486037}},
  doi={10.1007/BF01234541},
}

\bib{Vig03}{article}{
  author={Vign\'{e}ras, Marie-France},
  title={Schur algebras of reductive $p$-adic groups. I},
  journal={Duke Math. J.},
  volume={116},
  date={2003},
  number={1},
  pages={35--75},
  issn={0012-7094},
  review={\MR {1950479}},
  doi={10.1215/S0012-7094-03-11612-9},
}

\bib{Zou22}{article}{
  author={Zou, Jiandi},
  title={Metaplectic correspondence and applications},
  status={preprint, available at https://arxiv.org/abs/2209.05852v1},
}

\end{biblist}
\end{bibdiv}

\end{document}